\newtheorem{theorem}{Theorem}[section]
\newtheorem{lemma}[theorem]{Lemma}
\newtheorem{corollary}[theorem]{Corollary}
\theoremstyle{definition}
\newtheorem{definition}[theorem]{Definition}
\newtheorem{remark}[theorem]{Remark}
\newcommand{\Z}{{\mathbb Z}}
\newcommand{\N}{{\mathbb N}}
\newcommand{\R}{{\mathbb R}}
\newcommand{\B}{{\mathcal B}}
\renewcommand{\L}{{\mathcal L}}
\renewcommand{\P}{{\mathcal P}}
\DeclareMathOperator{\Orb}{Orb}
\DeclareMathOperator{\supp}{supp}
\DeclareMathOperator{\id}{id}
\DeclareMathOperator{\gr}{gr}
\numberwithin{equation}{section}
\begin{document}

\title[Relative, strictly ergodic model theorem]%
{A relative, strictly ergodic model theorem for infinite measure-preserving systems}

\author[H. Yuasa]{Hisatoshi Yuasa}
\address{Division of Science, Mathematics and Information, Osaka Kyoiku University,
4-698-1 Asahigaoka, Kashiwara, Osaka 582-8582, JAPAN.}

\email{hyuasa@cc.osaka-kyoiku.ac.jp}

\begin{abstract}
Every factor map between given ergodic, measure-preserving transformations on infinite Lebesgue spaces 
has a strictly ergodic, locally compact Cantor model. 
\end{abstract}

\keywords{measure-preserving system, strictly ergodic model}

\subjclass[2010]{ Primary 37A40, 37A05, 54H20; Secondary }

\maketitle

\section{Introduction}\label{intro}

A {\em Cantor set} is a totally disconnected, compact metric space without isolated points. As well 
known, such topological spaces are all homeomorphic to each other; see for example 
\cite[Theorem 2-97]{HockYoung}. A topological dynamical system on a Cantor set is called a 
{\em Cantor system}. A {\em Cantor model} of a measure-preserving system $\mathbf{Y}$ on a probability 
space is a Cantor system $\mathbf{X}$ which equipped with an invariant probability measure is 
measure-theoretically isomorphic to the system $\mathbf{Y}$. The Cantor system $\mathbf{X}$ is said to 
be {\em strictly ergodic} if it is {\em minimal}, i.e.\ every orbit is dense in $X$, and in addition, 
it is {\em uniquely ergodic}, i.e.\ it has a unique, invariant Borel probability measure. 
Jewett-Krieger Theorem~\cite{Jewett,Krieger} affirms that any ergodic, measure-preserving system on a 
Lebesgue probability space has a strictly ergodic Cantor model. 

B.~Weiss provided a relative, strictly 
ergodic model theorem \cite[Theorem~2]{W1} by restricting himself to stating some relevant lemmas 
without detailed proofs, which affirms that any factor map between arbitrary ergodic, 
measure-preserving systems on Lebesgue probability spaces has a strictly ergodic Cantor model. 
\begin{figure}
\begin{minipage}{0.32\linewidth}%
\centering
\includegraphics[scale=1.1]{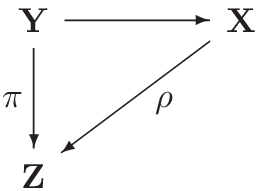}
\caption{}\label{triangle}
\end{minipage}\hfill
\begin{minipage}{0.32\linewidth}%
\centering
\includegraphics[scale=0.85]{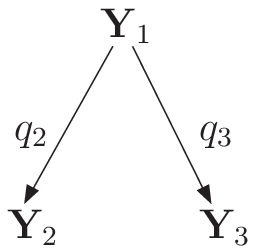}
\caption{}\label{inverted_tree}
\end{minipage}\hfill
\begin{minipage}{0.32\linewidth}%
\centering
\includegraphics[scale=0.85]{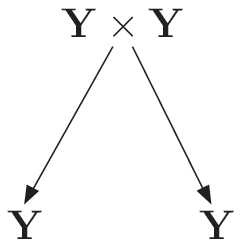}
\caption{}\label{product}
\end{minipage}
\end{figure}
F.~B\'eguin, S.~Crovisier and F.~Le~Roux~\cite[Section~1]{MR2875823} fully described a proof of the 
relative model theorem. One of our goals is to prove an infinite counterpart: 
\begin{theorem}\label{ICMT}
If $\pi$ is a measure-theoretical factor map from an ergodic, measure-preserving system 
$\mathbf{Y}$ on an infinite Lebesgue space to a strictly ergodic, locally compact Cantor system 
$\mathbf{Z}$, then there exist strictly ergodic, locally compact Cantor model $\mathbf{X}$ of the 
ergodic system $\mathbf{Y}$ and open and proper, topological factor map $\rho$ from $\mathbf{X}$ to 
$\mathbf{Z}$ for which the diagram in Figure~{\rm \ref{triangle}} commutes in the category of 
measure-preserving systems. 
\end{theorem}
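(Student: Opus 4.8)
The plan is to reduce the infinite-measure statement to the finite-measure relative model theorem (B. Weiss, B\'eguin--Crovisier--Le~Roux) by \emph{inducing} on the preimage of a well-chosen compact piece of the base, and then to recover the full system by a Kakutani tower (\emph{de-induction}). Write $\mathbf{Z}=(Z,S)$ with unique invariant Radon measure $\nu=\pi_*\mu$, which is infinite since $\mu$ is infinite, and recall that compact open sets form a basis of the locally compact Cantor set $Z$. I would first fix a nonempty compact open $U\subseteq Z$; being compact it has $\nu(U)<\infty$, and being a nonempty open set in the minimal system $\mathbf Z$ it has $\nu(U)>0$. The first-return map $S_U$ of $\mathbf Z$ to $U$ is a homeomorphism of the Cantor set $U$, and $(U,S_U)$ is again strictly ergodic: minimality and unique ergodicity are inherited because $U$ is clopen, and the return-time function $r\colon U\to\N$ is locally constant, hence continuous. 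On the other side, $A:=\pi^{-1}(U)$ has finite positive measure, the induced transformation $T_A$ is ergodic on the finite Lebesgue space $(A,\mu|_A)$, and since $T^k y\in A\iff S^k\pi(y)\in U$ the return time to $A$ equals $r\circ\pi$; consequently $\pi$ restricts to a measure-theoretical factor map $\pi_U\colon(A,T_A)\to(U,S_U)$.

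Next I would invoke the finite relative strictly ergodic model theorem in its \emph{open} form. Applied to $\pi_U$ over the strictly ergodic Cantor base $(U,S_U)$, it produces a strictly ergodic Cantor model $\mathbf X_U=(X_U,R)$ of $(A,T_A)$, with unique invariant probability measure $m_U$, together with an open topological factor map $\rho_U\colon X_U\to U$ that intertwines $R$ and $S_U$ and realizes $\pi_U$ under the measure-theoretic isomorphism $X_U\cong A$.

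Then I would de-induce. Set $\tilde r:=r\circ\rho_U\colon X_U\to\N$, a locally constant roof, and form the Kakutani tower
$$ X=\{(x,j): x\in X_U,\ 0\le j<\tilde r(x)\},\qquad \phi(x,j)=\begin{cases}(x,j+1),& j+1<\tilde r(x),\\ (Rx,0),& j+1=\tilde r(x),\end{cases} $$
topologized so that each level $\{(x,j):\tilde r(x)>j\}$ is clopen and homeomorphic to a clopen subset of $X_U$, and define $\rho(x,j)=S^j\rho_U(x)$. I claim $\mathbf X=(X,\phi)$ is the desired model. Local compactness, total disconnectedness and perfectness are inherited levelwise from $X_U$; non-compactness holds because $\tilde r$ is unbounded, since by Kac's formula $\int_U r\,d\nu=\nu(Z)=\infty$, so $X$ is a locally compact Cantor set. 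Minimality of $\phi$ follows from minimality of $R$ together with the clopen level structure, and $\mathbf X$ is uniquely ergodic because any nonzero $\phi$-invariant Radon measure assigns finite positive mass to the compact base level $X_U\times\{0\}$, restricts there to an $R$-invariant measure which by strict ergodicity of $\mathbf X_U$ is proportional to $m_U$, and is thereby a scalar multiple of the tower measure $m(E)=\sum_{j\ge0}m_U(\{x:\tilde r(x)>j,\ (x,j)\in E\})$; so $\mathbf X$ is strictly ergodic. The map $\rho$ intertwines $\phi$ and $S$, is continuous and open (on each level it is $S^j\circ\rho_U$ restricted to a clopen set), and is proper because the clopen sets $S^j(\{z\in U:r(z)>j\})$, $j\ge0$, form a disjoint open cover of $Z$, so any compact $K\subseteq Z$ meets only finitely many of them and $\rho^{-1}(K)$ lies in finitely many compact levels. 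Finally, de-induction is functorial: the isomorphism $X_U\cong A$ carrying $\rho_U$ to $\pi_U$ lifts to an isomorphism of $\mathbf X$ with the Kakutani tower over $(A,T_A)$ under $r\circ\pi$, which reconstructs $\mathbf Y$ and carries $\rho$ to $\pi$, so the triangle of Figure~\ref{triangle} commutes in the category of measure-preserving systems.

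The main obstacle I anticipate lies not in the reduction but in the verification that de-induction preserves \emph{every} required property simultaneously in the infinite-measure, locally compact category: that the tower is genuinely a locally compact, non-compact Cantor set; that unique ergodicity is correctly formulated and proved for invariant \emph{Radon} measures of infinite total mass (controlled via the finite-mass return to the clopen base); and, above all, that $\rho$ is at once open \emph{and} proper while the measure-theoretic isomorphism $\mathbf X\cong\mathbf Y$ respects the factor maps. A secondary point requiring care is confirming that the cited finite relative theorem applies with the prescribed base $(U,S_U)$ and does deliver an \emph{open} $\rho_U$, since openness of $\rho$ is explicitly demanded by the statement.
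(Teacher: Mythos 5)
Your reduction breaks down at the very first step: in a minimal, locally compact Cantor system carrying an \emph{infinite} invariant Radon measure, the first return map to a nonempty compact open set $U$ is \emph{not} a homeomorphism of $U$, because some points of $U$ never return. Indeed, suppose every point of $U$ returned. Each level set $r^{-1}\{k\}=U\cap S^{-k}U\cap\bigcap_{i=1}^{k-1}(Z\setminus S^{-i}U)$ is clopen, so these sets would form a disjoint open cover of the compact set $U$; hence only finitely many would be nonempty and $r$ would be bounded, say by $N$. Kac's formula (together with ergodicity of $\nu$, which forces the forward-invariant open set $\bigcup_{n\ge0}S^nU$ to be conull) then gives $\nu(Z)=\int_U r\,d\nu\le N\nu(U)<\infty$, contradicting $\nu(Z)=\infty$. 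This is exactly the tension already visible inside your own write-up: you assert both that $r$ is locally constant (hence continuous, hence bounded on the compact set $U$) and that $r$ is unbounded by Kac's formula; these cannot both hold. What actually happens is that the closed, $\nu$-null set $U_\infty=U\cap\bigcap_{n\ge1}S^{-n}(Z\setminus U)$ of non-returning points is nonempty, and $r$ blows up near it. (Concretely, in the almost minimal subshifts of \cite{Y4} there are points of the form $\cdots\,.\,a1^\infty$ whose forward orbit converges to the point at infinity and never revisits $[a]$.)

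Consequently $(U,S_U)$ is not a Cantor system and the finite-measure relative theorem cannot be invoked over this prescribed topological base; moreover the roof $\tilde r$ of your Kakutani tower is unbounded and discontinuous, so the levels are not clopen, the tower is not the locally compact Cantor set you need, and the properness and openness of $\rho$ cannot be salvaged from the levelwise description. Inducing measure-theoretically on $A=\pi^{-1}(U)$ is of course fine, but de-inducing then only recovers $\mathbf Z$ up to measure-theoretic isomorphism, whereas the theorem requires a topological factor map onto the \emph{given} $\mathbf Z$. This obstruction is precisely why the paper does not induce: it instead passes to the one-point compactification $\hat{\mathbf Z}$, where the unbounded excursions away from every compact set are absorbed into a single infinite level of height one of the Kakutani--Rohlin partitions $\tilde{\mathfrak t}_n(\gamma_i)$; these are pulled back to $\mathbf Y$, the Jewett--Krieger copying argument is run directly in the infinite-measure category to produce strictly uniform partitions $\alpha_i\succcurlyeq\beta_i=\pi^{-1}\gamma_i$, and $\mathbf X$ is realized as an inverse limit of almost minimal subshifts with $\rho$ induced by the maps $\phi_{\beta_i,\alpha_i}$.
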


A {\em locally compact Cantor set} \cite{D} is a totally disconnected, locally compact (non-compact) 
metric space $X$ without isolated points. The one-point compactification of such 
a topological space is exactly a Cantor set. If $S:X \to X$ is a homeomorphism, then 
$\mathbf{X}:=(X,S)$ is referred to as a {\em locally compact Cantor system}. The locally compact 
Cantor system $\mathbf{X}$ is said to be {\em strictly ergodic} if it is minimal, i.e.\ the orbit 
$\Orb_S(x) := \Set{S^kx|k \in \Z}$ of any point $x \in X$ is dense in $X$, and in addition, it is 
{\em uniquely ergodic}, i.e.\ an $S$-invariant Radon measure is unique up to scaling. This definition 
of strict ergodicity clearly extends that of unique ergodicity for Cantor systems. The system 
$\mathbf{X}$ is called a {\em locally compact Cantor model} of a measure-preserving system $\mathbf{Y}$ 
if the system $\mathbf{X}$ equipped with an invariant Radon measure is measure-theoretically isomorphic 
to the system $\mathbf{Y}$. F.~B\'eguin, S.~Crovisier and F.~Le~Roux \cite[Theorem~A.3]{MR2875823} also 
showed that any factor map between ergodic, measure-preserving systems on {\em probability} spaces has 
a ``strictly ergodic'', locally compact Cantor model, although they took into account unique 
ergodicity {\em only within the probability measures}. 

Combining Theorem~\ref{ICMT} with \cite[Theorem~4.6]{Y}, we obtain an immediate consequence that if 
$\mathbf{Y}$ and $\mathbf{Z}$ are ergodic, measure-preserving systems on infinite Lebesgue 
spaces and $\pi$ is a factor map from $\mathbf{Y}$ to $\mathbf{Z}$ then there exist strictly ergodic, 
locally compact Cantor models $\mathbf{X}$ and $\mathbf{W}$ of $\mathbf{Y}$ and $\mathbf{Z}$, 
respectively, and an open and proper, factor map $\rho$ from $\mathbf{X}$ to $\mathbf{W}$ so that a 
square diagram: 
\begin{center}
\includegraphics{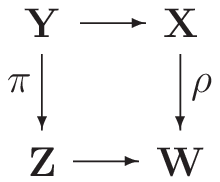}
\end{center}
commutes in the category of measure-preserving systems. We then say that $\pi:\mathbf{Y} \to 
\mathbf{Z}$ has a strictly ergodic, locally compact Cantor model $\rho:\mathbf{X} \to \mathbf{W}$. 

The above-mentioned consequence has a paraphrase in terms of {\em almost minimal} Cantor 
systems~\cite{D}. A Cantor 
system is said to be almost minimal if it has a unique fixed point and any other point has a dense 
orbit. An almost minimal Cantor system is regarded as the one-point compactification of a minimal, 
locally compact Cantor system. If the minimal, locally compact Cantor system is uniquely ergodic, then 
the associated almost minimal Cantor system is said to be {\em bi-ergodic}, as the point mass on a fixed 
point is always invariant for the almost minimal Cantor system. The bi-ergodicity is dealt 
with also in \cite[Theorem~A.3]{MR2875823}, although their notion is actually different from ours as 
mentioned above. The paraphrase is now stated as follows. {\it If $\mathbf{Y}$ and $\mathbf{Z}$ are 
ergodic, measure-preserving systems on infinite Lebesgue spaces then any factor map 
$\pi:\mathbf{Y} \to \mathbf{Z}$ has a bi-ergodic, almost minimal Cantor model 
$\hat{\rho}:\hat{\mathbf{X}} \to \hat{\mathbf{W}}$.} It is now clear how to describe a paraphrase of 
Theorem~\ref{ICMT} in terms of almost minimal Cantor systems. 

B.~Weiss~\cite{W1} also gave a sufficient condition for a diagram in the category of ergodic 
measure-preserving systems on probability spaces to have a strictly ergodic Cantor model. 
Another goal of ours is to prove its infinite counterpart: 
\begin{theorem}\label{categorical_consequence}
In the category of ergodic, measure-preserving systems on infinite Lebesgue spaces, any diagram 
including no portions with shape of Figure~{\rm \ref{inverted_tree}} has a strictly ergodic, locally 
compact Cantor model. Actually, no diagrams including portions with shape of 
Figure~{\rm \ref{product}} have strictly ergodic, locally compact Cantor models. 
\end{theorem}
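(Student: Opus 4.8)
The plan is to prove the two assertions by completely different means: the positive one (every diagram free of inverted-tree portions admits a model) by an induction whose only engine is Theorem~\ref{ICMT}, and the negative one (a product portion obstructs every model) by an appeal to unique ergodicity.

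First I would make the hypothesis combinatorial. View the diagram as a finite set of ergodic, infinite measure-preserving systems together with the factor maps drawn among them, and partially order the systems by the factor relation these maps generate. Absence of a portion shaped like Figure~\ref{inverted_tree} means precisely that no system possesses two \emph{incomparable} factors within the diagram; equivalently, for each node the collection of its factors present in the diagram is a chain. I would then induct on the number of nodes, processing them in a linear extension of the factor order that always lists a factor before its extensions. The base case is a node with no proper factor in the diagram, which receives a strictly ergodic, locally compact Cantor model by the absolute (single-system) infinite strictly ergodic model theorem underlying Theorem~\ref{ICMT}. For the inductive step, when a node $\mathbf{N}$ is reached every factor of $\mathbf{N}$ occurring in the diagram has already been modelled; since these factors form a chain, $\mathbf{N}$ has a \emph{largest} one $\mathbf{M}$, already realised as a strictly ergodic, locally compact Cantor system. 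Applying Theorem~\ref{ICMT} to the measure-theoretic factor map $\mathbf{N}\to\mathbf{M}$ yields a strictly ergodic, locally compact Cantor model of $\mathbf{N}$ and an open, proper topological factor map onto the model of $\mathbf{M}$ for which the corresponding triangle of Figure~\ref{triangle} commutes.

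It then remains to check that introducing $\mathbf{N}$ does not disturb the rest of the diagram. Because $\mathbf{M}$ is the largest factor of $\mathbf{N}$, every other factor of $\mathbf{N}$ in the diagram is itself a factor of $\mathbf{M}$, so each remaining measure-theoretic factor map out of $\mathbf{N}$ factors through $\mathbf{M}$ and is thereby realised by the composite of two continuous, open and proper maps; since open, proper maps between locally compact Hausdorff spaces are closed under composition, all edges incident to $\mathbf{N}$ become open and proper topological factor maps and the enlarged diagram still commutes. Running this through all nodes produces a strictly ergodic, locally compact Cantor model of the whole diagram. I expect the delicate point here to be exactly this compatibility bookkeeping: the inverted-tree-freeness is what guarantees that a single lift over the largest factor suffices, so that one never has to match a newly built system against two independently constructed targets at once.

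For the negative assertion, suppose a diagram contained a portion shaped like Figure~\ref{product} and, seeking a contradiction, admitted a strictly ergodic, locally compact Cantor model. Such a portion presents a node $\mathbf{V}$ as a product (equivalently, a relatively independent joining) of two factors $\mathbf{A}$ and $\mathbf{B}$ that also appear in the diagram, together with the two coordinate projections. In the putative model $\mathbf{V}$ would be a minimal, uniquely ergodic locally compact Cantor system whose unique invariant Radon measure is carried, through continuous projections, onto the strictly ergodic models of $\mathbf{A}$ and $\mathbf{B}$ and realises exactly that joining. The hard part will be to produce a \emph{second} invariant Radon measure on the model of $\mathbf{V}$ that is not a scalar multiple of the first: this should come from a joining of $\mathbf{A}$ and $\mathbf{B}$ different from the one $\mathbf{V}$ realises --- available precisely because the two factors fail to be disjoint, as the product shape records --- and the work is to verify that this alternative joining is supported by an invariant Radon measure of the topological model and is genuinely non-proportional. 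Its existence contradicts unique ergodicity, so no such model can exist, which also shows that the inverted-tree condition cannot be relaxed to admit product portions.
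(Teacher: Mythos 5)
Your positive half is fine and is essentially what the paper intends: the paper disposes of the first assertion in one sentence (``In view of Theorem~\ref{ICMT}, it is sufficient to prove the last assertion''), and your induction along a linear extension of the factor order, lifting over the largest already-modelled factor at each node and using that compositions of open, proper maps are open and proper, is the natural way to fill that in.

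The negative half, however, has a genuine gap at exactly the point you flag as ``the hard part.'' You propose to contradict unique ergodicity of the model $\mathbf{X}_1$ of the product node by exhibiting a second invariant Radon measure \emph{on $X_1$} arising from a second joining of the two feet. But a joining of $\mathbf{A}$ and $\mathbf{B}$ is a measure on $A\times B$, i.e.\ on $X_2\times X_3$, and the only map available, $p_2\times p_3\colon X_1\to X_2\times X_3$, points the wrong way: there is no natural, invariant, Radon lift of such a joining back to $X_1$, and nothing in the hypotheses produces one. The paper avoids this entirely by transporting the contradiction \emph{downward} rather than pulling a measure upward. It first proves (Lemma~\ref{disjoint}) that $p_2\times p_3$ extends to a continuous factor map of one-point compactifications --- properness of $x\mapsto(x,x)$ composed with $p_2\times p_3$ gives continuity at infinity, and surjectivity follows because $(\mu\times\mu)\bigl(q_2^{-1}\sigma_2^{-1}(U_2)\cap q_3^{-1}\sigma_3^{-1}(U_3)\bigr)>0$ forces $p_2^{-1}(U_2)\cap p_3^{-1}(U_3)\neq\emptyset$ --- so that Corollary~\ref{factor_of_unq_LCCM} makes $\mathbf{X}_2\times\mathbf{X}_3$ itself strictly ergodic. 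On $X_2\times X_3$ the two competing invariant Radon measures are then already in place: the image of $\mu\times\mu$ and the image of the diagonal measure $\gr(\mu,\id)$, which are manifestly not proportional. Note also two points you omit: the feet are taken to be the \emph{same} system $\mathbf{Y}$ (so the diagonal joining exists; for distinct, possibly disjoint feet your ``alternative joining'' need not exist), and one needs $\mathbf{Y}\times\mathbf{Y}$ to be ergodic for the product node to belong to the category at all --- the existence of such $\mathbf{Y}$ is the Kakutani--Parry input the paper cites. Without the descent step via Corollary~\ref{factor_of_unq_LCCM}, the mere existence of two joinings of $\mathbf{A}$ and $\mathbf{B}$ contradicts nothing, since $\mathbf{X}_2\times\mathbf{X}_3$ is not a node of the modelled diagram.
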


We will prove Theorems~\ref{ICMT} and \ref{categorical_consequence} in Sections~\ref{the_proof} and 
\ref{pf_for_thm1.2}, respectively, and in particular, accomplish a proof of Theorem~\ref{ICMT} almost 
along the line of \cite[Sub-lemma~A.1]{MR2875823}. In virtue of \cite{Y,Y5}, almost all devices are 
actually ready to achieve our goals. We will review them in Sections~\ref{se}-\ref{towers}. 

Any relations among measurable sets, any properties of maps between measure spaces, etc.\ are always 
understood to hold up to sets of measure zero. 

\section{Strict ergodicity}\label{se}

Let $(Y,\B,\mu)$ be an {\em infinite Lebesgue space}, i.e.\ a measure space isomorphic to a measure 
space $\R$ endowed with Lebesgue measure together with the $\sigma$-algebra of Lebesgue measurable 
subsets. Set $\B_0=\Set{A \in \B|0 < \mu(A) < \infty}$. The measure space $(Y,\B,\mu)$ has a 
countable {\em base} $\mathcal{E} \subset \B_0$, i.e.\ 
\begin{itemize}
\item
$\mathcal{E}$ {\em generates} the $\sigma$-algebra $\B$, i.e.\ $\B$ is 
the completion of the smallest $\sigma$-algebra including $\mathcal{E}$ with respect to $\mu$;
\item
$\mathcal{E}$ {\em separates points on} $Y$, i.e.\ only one of any distinct two points in $Y$ is 
included in some set belonging to $\mathcal{E}$.
\end{itemize}
The author refers the readers to \cite{Aa,G} for fundamental properties of a Lebesgue 
space. 

A bi-measurable bijection $T:Y \to Y$ is said to be {\em measure-preserving} if 
$\mu(T^{-1}E)=\mu(E)$ for all sets $E \in \B$. 
The measure $\mu$ is then said to be {\em $T$-invariant}, or simply, {\em invariant}. We refer to 
$\mathbf{Y}:=(Y,\B,\mu,T)$ as an {\em infinite measure-preserving system}. The system 
$\mathbf{Y}$ is said to be {\em ergodic} if the measure of any invariant set is zero or full, 
or equivalently, any $T$-invariant, measurable function on $Y$ is constant. 

Another infinite measure-preserving system $\mathbf{Z}=(Z,\mathcal{C},\nu,U)$ is called a 
{\em factor} of the system $\mathbf{Y}$ if there exists a measurable surjection 
$\phi:Y \to Z$ for which $\mu \circ \phi^{-1} = \nu$ and $\phi \circ T = U \circ \phi$. 
The map $\phi$ is then called a {\em factor map} from $\mathbf{Y}$ to $\mathbf{Z}$. If in addition 
$\phi$ is injective, then $\phi$ is called a {\em isomorphism} and $\mathbf{Y}$ is said to be 
{\em isomorphic} to $\mathbf{Z}$. If the system $\mathbf{Y}$ is ergodic, then the factor $\mathbf{Z}$ 
is necessarily ergodic, because if a measurable function $f:Z \to \R$ is $U$-invariant 
then a measurable function $f \circ \phi$ is $T$-invariant. 

We say that $\mathbf{X}=(X,S)$ is a {\em topological dynamical system} if $S$ is a homeomorphism on 
a $\sigma$-compact and locally compact metric space $X$. Given another topological dynamical 
system $\mathbf{W}=(W,V)$, a continuous surjection $\rho:X \to W$ is called a {\em factor map} from 
$\mathbf{X}$ to $\mathbf{W}$ if $\rho \circ S = V \circ \rho$. If, in addition, the continuous 
surjection $\rho$ is injective, then we say that $\rho$ is an {\em isomorphism} and $\mathbf{X}$ is 
{\em isomorphic} to $\mathbf{W}$. A positive Borel measure $\lambda$ on the $\sigma$-compact and 
locally compact metric space $X$ is called a {\em Radon measure} if $\lambda(K) < \infty$ for all 
compact subsets $K$ of $X$, so that any Radon measure is $\sigma$-finite. 
If the homeomorphism $S$ has a unique, up to scaling, invariant Radon measure, then the 
system $\mathbf{X}$ is said to be {\em uniquely ergodic}. If, in addition, the homeomorphism $S$ is 
minimal, then $\mathbf{X}$ is said to be {\em strictly ergodic}. 
If the metric space $X$ is a Polish space, then an ergodic decomposition  (see for example 
\cite[2.2.9]{Aa}) shows that the unique ergodicity implies the ergodicity. Actually, if $\lambda$ is 
the unique $S$-invariant Radon measure, then the ergodic decomposition guarantees that there exist a 
probability space $(\Omega,m)$ and $\sigma$-finite, ergodic Radon measures 
$\Set{\lambda_\omega|\omega \in \Omega}$ such that for any Borel set $A \subset X$,
\begin{itemize}
\item
a function $\Omega \to \R,\omega \mapsto \lambda_\omega(A)$ is measurable;
\item
$\lambda(A)=\int_\Omega \lambda_\omega(A) d m(\omega)$. 
\end{itemize}
Then, for any $\omega \in \Omega$, there exists a constant $c_\omega$ such that 
$\lambda_\omega = c_\omega \lambda$. Hence, the measure $\lambda$ is ergodic. 

Assume that $\mathbf{X}=(X,S)$ is a locally compact Cantor system; see Section~\ref{intro}. 
Let $\hat{X}$ denote the one-point compactification $X \cup \Set{\omega_X}$ of the locally compact 
Cantor set $X$. Define a homeomorphism $\hat{S}:\hat{X} \to \hat{X}$ by 
\[
\hat{S}x=
\begin{cases}
Sx & \textrm{ if } x \in X; \\
\omega_X & \textrm{ if } x = \omega_X.
\end{cases}
\]
We shall refer to $\hat{\mathbf{X}}:=(\hat{X},\hat{S})$ as the {\em one-point compactification} of 
the locally compact Cantor system $\mathbf{X}$. If the system $\mathbf{X}$ is minimal, then the system 
$\hat{\mathbf{X}}$ is almost minimal; see Section~\ref{intro}. Clearly, the one-point compactification 
provides us with a one-to-one correspondence between the class of minimal, locally compact Cantor 
systems and that of almost minimal Cantor systems. 

If $\phi:X \to W$ is a continuous surjection between locally compact 
Cantor sets, then we let $\hat{\phi}$ denote a surjection from $\hat{X}$ to $\hat{W}$ which is 
defined in the same way as $\hat{S}$ defined above. If $\hat{\phi}$ is continuous, then $\phi$ must 
be {\em proper}. A continuous map $\phi:X \to W$ is said to be proper if a map 
$\phi \times \id_Z: X \times Z \to W \times Z, (x,z) \mapsto (\phi(x),z)$ is closed for any 
topological space $Z$; see \cite[Definition~1 in \S 10.1]{bourbaki}. This condition is equivalent to 
each of the following conditions:
\begin{itemize}
\item
the inverse image $\phi^{-1}(C)$ of any compact subset $C$ of the locally compact Cantor set $W$ is 
compact in the locally compact Cantor set $X$;
\item
the map $\phi$ is closed and the inverse image $\phi^{-1} \Set{w}$ of any point $w \in W$ is compact.
\end{itemize}
See \cite[Appendix~C]{akin2017chain}. This equivalence allows us to verify that if a continuous 
surjection $\phi: X \to W$ is proper then the surjection $\hat{\phi}:\hat{X} \to \hat{W}$ is 
continuous. If the continuous surjection $\phi$ is a factor map between minimal, locally compact 
Cantor systems, then it is natural to assume that $\phi$ is proper as well. See also \cite{M}. 
\begin{lemma}[{\cite[a portion of Proposition~3.3]{Y}}]\label{char_unq}
Assume that a locally compact Cantor system $\mathbf{X}=(X,S)$ has a compact open set 
$K \subset X$ whose orbit $\bigcup_{n \in \Z} S^n(K)$ coincides with $X$. Then, 
\begin{itemize}
\item
$\mathbf{X}$ has an invariant Radon measure$;$
\item\label{char_unq_erg}
$\mathbf{X}$ is uniquely ergodic if and only if the following two conditions hold$:$
\begin{itemize}
\item
$\# (\Orb_S(x) \cap K) = \infty$ for all $x \in X;$
\item
to any compact open set $A \subset X$ and $\epsilon > 0$ correspond $c \ge 0$ and $m \in \N$ 
such that for all $x \in K$,
\[
S_n \mathbbm{1}_K(x) \ge m \Rightarrow 
\left| \frac{S_n \mathbbm{1}_A(x)}{S_n \mathbbm{1}_K(x)} - c \right| < \epsilon,
\]
where $\mathbbm{1}_A$ is the indicator function of $A$ and for a function $f$ on $X$, 
\[
S_n f = \sum_{i=-n}^{n-1}f \circ S^i.
\]
\end{itemize}
\end{itemize}
\end{lemma}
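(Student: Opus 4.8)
The plan is to transcribe everything into the language of compact open sets. Since $X$ is totally disconnected, the compact open subsets form a countable ring that generates the Borel $\sigma$-algebra, and a non-negative, finitely additive, $S$-invariant content on this ring extends uniquely to an $S$-invariant Radon measure: countable additivity on the ring is automatic because a compact open set that is a countable disjoint union of compact open sets is already a finite union. Throughout I normalise an invariant measure $\lambda$ by $\lambda(K)=1$, which is legitimate since $K$ is compact (so $\lambda(K)<\infty$) while $\lambda(K)=0$ would force $\lambda(\bigcup_n S^n K)=\lambda(X)=0$ by invariance. For the first bullet I would fix a base point $x_0 \in K$ and set $\lambda(A)=\mathrm{LIM}_n\,\bigl(S_n\mathbbm{1}_A(x_0)/S_n\mathbbm{1}_K(x_0)\bigr)$ on compact open $A$, where $\mathrm{LIM}$ is a Banach limit and the denominator is $\ge 1$ for $n \ge 1$. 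These ratios are bounded: if $A \subseteq \bigcup_{|i|\le N}S^i K$ then $\mathbbm{1}_A \le \sum_{|i|\le N}\mathbbm{1}_K\circ S^{-i}$, whence $S_n\mathbbm{1}_A(x_0)\le (2N+1)(S_n\mathbbm{1}_K(x_0)+2N)$ and the quotient stays below $(2N+1)(1+2N)$, so $\lambda$ is a finite content with $\lambda(K)=1$. Invariance is the one delicate point: $|S_n\mathbbm{1}_{SA}(x_0)-S_n\mathbbm{1}_A(x_0)|\le 2$ is a pure boundary term, so the increment of the quotient is at most $2/S_n\mathbbm{1}_K(x_0)$; this tends to $0$ when $S_n\mathbbm{1}_K(x_0)\to\infty$, and otherwise the orbit of $x_0$ meets every compact set finitely (it escapes to infinity), so those boundary terms are eventually $0$. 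Either way $\lambda(SA)=\lambda(A)$.

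For the direction ($\Leftarrow$), I first observe that condition~(1) forces every invariant $\lambda$ to be conservative: were its dissipative part non-null it would meet $K$ in positive measure, and $\lambda$-a.e.\ point of that part would visit $K$ only finitely often, against~(1). Granting conservativity, I apply the ergodic decomposition \cite{Aa} $\lambda=\int\lambda_\omega\,dm(\omega)$ together with Hopf's ratio ergodic theorem \cite{Aa} to each ergodic component: for $\lambda_\omega$-a.e.\ $x$ one has $S_n\mathbbm{1}_A(x)/S_n\mathbbm{1}_K(x)\to \lambda_\omega(A)/\lambda_\omega(K)$, where $\lambda_\omega(K)>0$ because the orbit of $K$ covers $X$. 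Since~(1) gives $S_n\mathbbm{1}_K(x)\to\infty$ for \emph{every} $x$, condition~(2) forces the ratios to converge, uniformly in $x\in K$, to a single constant $c=c(A)$. Matching the two limits yields $\lambda_\omega(A)=c(A)\,\lambda_\omega(K)$ for every component, hence $\lambda(A)=c(A)$. As $c(A)$ is independent of $\lambda$, all normalised invariant measures agree on compact open sets and therefore coincide, so $\mathbf{X}$ is uniquely ergodic.

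For ($\Rightarrow$), unique ergodicity provides a single invariant Radon measure $\lambda$ with $\lambda(K)=1$, which is ergodic and, one checks, non-atomic, since an atom would propagate along a closed discrete orbit and force $X$ to be countable. If some $x_0$ had $\#(\Orb_S(x_0)\cap K)<\infty$ then, as above, its orbit would escape to infinity, so the orbit-counting measure $A\mapsto \#\{n : S^n x_0 \in A\}$ would be a second, atomic, invariant Radon measure, contradicting unique ergodicity; this proves~(1). For~(2) I would argue as in Oxtoby's uniform-convergence criterion, transported to the vague topology. If~(2) failed for some $A$ and $\epsilon>0$, I could choose $x_k\in K$ and $n_k$ with $S_{n_k}\mathbbm{1}_K(x_k)\to\infty$ along which the (bounded) ratios $S_{n_k}\mathbbm{1}_A(x_k)/S_{n_k}\mathbbm{1}_K(x_k)$ have two distinct limit points $c_1\ne c_2$. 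The normalised orbit averages $\mu_k=\bigl(S_{n_k}\mathbbm{1}_K(x_k)\bigr)^{-1}\sum_{i=-n_k}^{n_k-1}\delta_{S^i x_k}$ obey a uniform bound on each compact set, so they subconverge vaguely; because $\mathbbm{1}_K,\mathbbm{1}_A\in C_c(X)$ the limits $\mu_1,\mu_2$ satisfy $\mu_j(K)=1$ and $\mu_j(A)=c_j$, and the estimate $|\mu_k(f\circ S)-\mu_k(f)|\le 2\|f\|_\infty/S_{n_k}\mathbbm{1}_K(x_k)\to 0$ makes each $\mu_j$ invariant. Thus $\mu_1\ne\mu_2$ would be two normalised invariant Radon measures, contradicting unique ergodicity.

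The main obstacle is exactly this forward half of~(2): the Oxtoby-type argument. Its two delicate features are preventing the limiting averages from leaking all of their mass into the point at infinity -- which is resolved precisely because $\mathbbm{1}_K$ is continuous with compact support, pinning $\mu_j(K)=1$ -- and certifying that the vague limits are genuine, non-trivial, invariant Radon measures, which rests on the uniform local bounds and the vanishing of the boundary terms. By contrast, the existence statement and part~(1) are comparatively soft.
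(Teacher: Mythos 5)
The paper does not prove this lemma at all --- it is imported verbatim from \cite[Proposition~3.3]{Y} --- so there is nothing internal to compare against; I am judging your argument on its own. Most of it is sound and follows the expected Krylov--Bogolyubov/Hopf/Oxtoby circle of ideas: the construction of an invariant Radon measure via Banach limits of $K$-normalised window averages, the reduction of countable additivity to finite additivity on the ring of compact open sets, the ($\Leftarrow$) direction via conservativity, ergodic decomposition and the ratio ergodic theorem, and the ($\Rightarrow$) direction for condition~(2) via vague limits of empirical measures are all correct (modulo routine care with the two-sided sums $S_n$, handled by the mediant inequality).

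The genuine gap is in ($\Rightarrow$) for condition~(1). If some $x_0$ has $\#(\Orb_S(x_0)\cap K)<\infty$, you correctly observe that its orbit is closed and discrete and that the orbit-counting measure $\eta$ is an invariant Radon measure with $0<\eta(K)<\infty$. But calling $\eta$ a ``second'' measure contradicting unique ergodicity presupposes that the unique invariant Radon measure is not itself a multiple of $\eta$, and your only argument for that --- ``an atom would propagate along a closed discrete orbit and force $X$ to be countable'' --- is false: an atom only forces the \emph{measure} to be carried by a countable closed orbit, not $X$ to be countable, and a Radon measure carried by a closed discrete orbit in an uncountable perfect space is not by itself absurd. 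Indeed, your own Banach-limit construction based at a point $x_0$ whose orbit meets $K$ exactly $M$ times produces precisely $\eta/M$, so the existence statement in the first bullet gives you nothing new here. To close the gap you must actually exhibit an invariant Radon measure that is \emph{not} proportional to $\eta$ --- for instance by basing the Krylov--Bogolyubov construction at a point of the nonempty open set $K\setminus\Orb_S(x_0)$ and proving that the resulting measure does not concentrate all of its $K$-mass on the finite set $\Orb_S(x_0)\cap K$ --- and that last step requires a real argument (or a different route altogether), not merely the observation that $\eta$ is atomic. Note that in every application made in this paper the system is also minimal, in which case (1) is automatic because an orbit meeting the open set $K$ only finitely often would have a proper closed orbit closure; but the lemma as stated does not assume minimality, so the point cannot be waved away.
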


\begin{remark}
Let $\mathbf{X}$ be as in the hypothesis of Lemma~\ref{char_unq}. If $\mathbf{X}$ is uniquely ergodic 
under a unique, up to scaling, invariant Radon measure $\lambda$, then its minimality is equivalent to 
saying that the support $\supp(\lambda)$ of $\lambda$ coincides with $X$, which is 
the smallest closed subset of $X$ whose complement has measure zero with respect to $\lambda$. 
Compare this fact with \cite[Theorem~6.17]{Walters1}. 

There exist bi-ergodic, for whose definition see Section~\ref{intro}, almost minimal subshifts over 
finite alphabets~\cite{Y4}, which arise from non-primitive substitutions. 
\end{remark}

\begin{corollary}\label{factor_of_unq_LCCM}
If $\phi$ is a proper factor map from a strictly ergodic, locally compact Cantor system 
$\mathbf{X}=(X,S)$ to a locally compact Cantor system $\mathbf{W}=(W,V)$, then the system 
$\mathbf{W}$ is strictly ergodic. 
\end{corollary}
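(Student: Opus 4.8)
The plan is to establish the two ingredients of strict ergodicity for $\mathbf{W}$---minimality and unique ergodicity---by transferring each of them from $\mathbf{X}$ along the factor map $\phi$. Minimality is the soft part and uses only that $\phi$ is a continuous surjection intertwining $S$ and $V$. Given $w \in W$, choose $x \in \phi^{-1}(w)$; since $\mathbf{X}$ is minimal, $\overline{\Orb_S(x)}=X$, and therefore
\[
W=\phi(X)=\phi\bigl(\overline{\Orb_S(x)}\bigr)\subseteq \overline{\phi(\Orb_S(x))}=\overline{\Orb_V(w)},
\]
so every $V$-orbit is dense and $\mathbf{W}$ is minimal.

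For unique ergodicity I would invoke Lemma~\ref{char_unq}, which forces me first to exhibit a compact open set in $W$ whose orbit is all of $W$. Since $W$ is a locally compact Cantor set, the compact open sets form a basis; pick any nonempty compact open $L\subseteq W$. The set $\bigcup_{n}V^n L$ is open, nonempty and $V$-invariant, and by the minimality just proved it meets every orbit, hence equals $W$. Now set $K:=\phi^{-1}(L)$. Here properness enters decisively: $K$ is open because $\phi$ is continuous and compact because $\phi$ is proper (the preimage of a compact set), so $K$ is compact open in $X$, and
\[
\bigcup_{n}S^nK=\bigcup_{n}\phi^{-1}(V^nL)=\phi^{-1}\Bigl(\bigcup_{n}V^nL\Bigr)=\phi^{-1}(W)=X.
\]
Thus $K$ is a compact open set whose orbit covers $X$, and since $\mathbf{X}$ is uniquely ergodic, the two conditions of Lemma~\ref{char_unq} hold for the pair $(X,K)$.

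The bridge between the two systems is the Birkhoff-sum identity obtained from $\phi\circ S=V\circ\phi$: writing $V_nf:=\sum_{i=-n}^{n-1}f\circ V^i$, one has
\[
S_n\mathbbm{1}_{\phi^{-1}(A)}(x)=V_n\mathbbm{1}_A(\phi(x))
\]
for every Borel set $A\subseteq W$ and every $x\in X$, because $\mathbbm{1}_{\phi^{-1}(A)}(S^ix)=\mathbbm{1}_A(V^i\phi(x))$. Using this I would verify the two conditions of Lemma~\ref{char_unq} for $(W,L)$. For the first, given $w\in W$ pick $x\in\phi^{-1}(w)$; the indices $i$ with $V^iw\in L$ are exactly those with $S^ix\in K$, and since minimal infinite systems are aperiodic (a periodic orbit would be a finite closed invariant set, forcing $X$ to be finite), counting orbit points is the same as counting return indices, whence $\#(\Orb_V(w)\cap L)=\#(\Orb_S(x)\cap K)=\infty$. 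For the second, given a compact open $A\subseteq W$ and $\epsilon>0$, the set $\phi^{-1}(A)$ is again compact open in $X$ by continuity and properness, so Lemma~\ref{char_unq} supplies $c\ge0$ and $m\in\N$ with the ratio estimate for $\phi^{-1}(A)$ relative to $K$; since every $w\in L$ has a preimage $x\in\phi^{-1}(L)=K$, the identity above turns that estimate verbatim into the required estimate for $A$ relative to $L$. Lemma~\ref{char_unq} then yields that $\mathbf{W}$ is uniquely ergodic, and together with minimality this makes $\mathbf{W}$ strictly ergodic.

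The main obstacle I anticipate is not any single hard computation but making sure the hypotheses of Lemma~\ref{char_unq} are genuinely met on the $W$-side; this is precisely where properness of $\phi$ is indispensable, since the lemma's ratio criterion is stated only for compact open sets and it is properness that guarantees $K=\phi^{-1}(L)$ and each $\phi^{-1}(A)$ remain compact. The remaining points---the orbit-covering property of $L$, the aperiodicity needed to equate orbit-point counts with return-index counts, and the surjectivity ensuring each $w\in L$ lifts into $K$---are routine once the Birkhoff-sum identity is in place.
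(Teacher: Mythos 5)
Your proposal is correct and follows essentially the same route as the paper: minimality via $W=\phi(X)=\phi(\overline{\Orb_S(x)})\subseteq\overline{\Orb_V(w)}$, and unique ergodicity by pulling a compact open $L\subseteq W$ back to the compact open set $\phi^{-1}(L)$ (using properness) and transferring the ratio criterion of Lemma~\ref{char_unq} through the identity $S_n\mathbbm{1}_{\phi^{-1}(B)}=(V_n\mathbbm{1}_B)\circ\phi$. Your explicit verification of the infinite-hitting condition $\#(\Orb_V(w)\cap L)=\infty$ is a small extra care the paper leaves implicit, but it does not change the argument.
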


\begin{proof}
Any point $w \in W$ equals $\phi(x)$ for some point $x \in X$. Since $\phi$ is continuous, 
\[
W=\phi(X)=\phi \left(\overline{\Orb_S(x)}\right) \subset \overline{\phi(\Orb_S(x))} = 
\overline{\Orb_V(w)},
\]
and hence $\mathbf{W}$ is minimal. 

Fix a nonempty compact open subset $L$ of $W$. Since $\phi$ is proper and continuous, an inverse 
image $\phi^{-1}(L)$ is compact and open. Since $\mathbf{W}$ is minimal, it holds that 
$\bigcup_{i \in \Z}V^i L=W$ and hence $\bigcup_{i \in \Z}S^i \phi^{-1}(L)=X$. Let $\epsilon > 0$ and 
compact open subset $B$ of $W$ be arbitrary. Since $\mathbf{X}$ is uniquely ergodic, in virtue of 
Lemma~\ref{char_unq}, there exist $c \ge 0$ and $m \in \N$ such that for all $x \in \phi^{-1}(L)$,
\[
S_n \mathbbm{1}_{\phi^{-1}(L)}(x) \ge m \Rightarrow \left| \frac{S_n \mathbbm{1}_{\phi^{-1}(B)}(x)}
{S_n\mathbbm{1}_{\phi^{-1}(L)}(x)} -c \right| < \epsilon,
\]
which implies that for all $y \in L$,
\[
V_n \mathbbm{1}_L(y) \ge m \Rightarrow \left| \frac{V_n \mathbbm{1}_B(y)}{V_n\mathbbm{1}_L(y)} -c 
\right| < \epsilon,
\]
because $S_n \mathbbm{1}_{\phi^{-1}(B)}=(V_n\mathbbm{1}_B) \circ \phi$. Finally, Lemma~\ref{char_unq} 
again guarantees the unique ergodicity of the system $\mathbf{W}$. 
\end{proof}

\section{Symbolic factors associated with partitions}\label{part_symbl}

Let $\mathbf{Y}=(Y,\B,\mu,T)$ be an ergodic, infinite measure-preserving system. 
Simply by a {\em partition} of $Y$, we mean an ordered, finite family 
$\alpha=\Set{A_1,A_2,\dots,A_m}$ of nonempty, measurable subsets of $Y$ satisfying that 
\begin{itemize}
\item
$m \ge 2$; 
\item
$\alpha$ is a partition of $Y$ in the usual sense;
\item
$\mu(A_i)=\infty$ if and only if $i = 1$.
\end{itemize}
Each member of $\alpha$ is called an {\em atom}. In particular, a single atom $A_1$ 
and the other atoms $A_2,\dots,A_m$ are called {\em infinite atom} and {\em 
finite atoms}, respectively. Set $K_\alpha=Y \setminus A_1$, which is called a {\em finite support} 
of $\alpha$. We let $\mathfrak{A}_\alpha$ denote the set of {\em subscripts} of atoms of the partition 
$\alpha$, i.e.\ $\mathfrak{A}_\alpha=\Set{1,2,\dots,m}$. 

Let $\beta=\Set{B_1,B_2,\dots,B_n}$ be another partition of $Y$. If each atom of the partition $\beta$ 
is included in an atom of the partition $\alpha$, then the partition $\beta$ is called a 
{\em refinement} of the partition $\alpha$. It is then designated by $\beta \succ \alpha$. If, in 
addition, the partitions $\alpha$ and $\beta$ have their finite supports in common, i.e.\ 
$K_\beta=K_\alpha$, then the refinement is designated by $\beta \succcurlyeq \alpha$. If $m = n$, then 
the partitions $\alpha$ and $\beta$ have a distance defined by 
\[
d(\alpha,\beta) = \sum_{i \ne 1}\mu(A_i \triangle B_i),
\]
which is a complete metric in the sense that if $(\alpha_i)_{i \in \N}$ is a Cauchy sequence of 
partitions in the distance $d$ then there exists a partition $\alpha$ to which the sequence 
$(\alpha_i)_{i \in \N}$ converges in the distance $d$. 

A {\em join} $\alpha \vee \beta$ of the partitions $\alpha$ and $\beta$ is defined to be a partition: 
\[
\Set{A \cap B|A \in \alpha, B \in \beta}.
\]
Clearly, the notion of join can be extended to any finite number of partitions. Given partitions 
$\alpha_1,\alpha_2,\dots,\alpha_k$ of $Y$, it is natural to regard 
$\mathfrak{A}_{\alpha_1 \vee \alpha_2 \vee \dots \vee \alpha_k}$ as a subset of a product set 
$\prod_{i=1}^k \mathfrak{A}_{\alpha_i}$. For example, an infinite atom of 
$\bigvee_{i=1}^k \alpha_i$ has subscript $(1,1, \dots, 1)$. We let $\mathcal{F}_\alpha$ denote 
the algebra generated by $\bigcup_{k=1}^\infty \alpha_{-k}^{k-1}$, where 
\begin{equation*}
\alpha_\ell^k = T^{-\ell}\alpha \vee T^{-(\ell+1)}\alpha \vee \dots \vee T^{-k}\alpha
\end{equation*}
for integers $k,\ell$ with $\ell \le k$. Also, set 
\begin{equation}\label{alpha^T}
\alpha^T=\Set{\bigcup_i T^{k_i}A_{\ell_i}| k_i \in \Z, A_{\ell_i} \in \alpha \cap \B_0, \textrm{ and } 
\ell_i \ne \ell_j \textrm{ if } i \ne j}.
\end{equation}
Remark that $\# (\alpha \cap \B_0) < \infty$. Given a set $E \in \B$, we use notation 
$E \overset{\epsilon}{\in} \alpha^T$ to mean that $\mu(E \triangle F) \le \epsilon$ for some set 
$F \in \alpha^T$. 
\begin{remark}\label{conti_1-block_to_k-block}
Assume that $\# \alpha = \# \beta$ in other words $\mathfrak{A}_\alpha = \mathfrak{A}_\beta$. Since 
\[
\mu \left(\bigcap_{j=-k+1}^{k-1}T^{-j}A_{i_j} \triangle \bigcap_{j=-k+1}^{k-1}T^{-j}B_{i_j} 
\right) \le \sum_{j=-k+1}^{k-1} \mu(A_{i_j} \triangle B_{i_j}) \le (2k-1)d(\alpha,\beta)
\]
for arbitrary $k \in \N$ and $\Set{i_j \in \Set{1,2,\dots,n}|-k+1 \le j \le k-1}$, 
it follows that if $\mathfrak{A}_{\alpha_{-k+1}^{k-1}} = \mathfrak{A}_{\beta_{-k+1}^{k-1}}$, viewed as 
subsets of $\prod_{i=-k+1}^{k-1} \mathfrak{A}_\alpha$ and $\prod_{i=-k+1}^{k-1} \mathfrak{A}_\beta$ 
respectively, then 
$d(\alpha_{-k+1}^{k-1},\beta_{-k+1}^{k-1}) \le (2k-1) \cdot \# \alpha_{-k+1}^{k-1} \cdot 
d(\alpha,\beta)$. 
This implies that $(\alpha_i)_{-k+1}^{k-1} \to \alpha_{-k+1}^{k-1}$ in $d$ as $i \to \infty$ if 
$\mathfrak{A}_{(\alpha_i)_{-k+1}^{k-1}}= \mathfrak{A}_{\alpha_{-k+1}^{k-1}}$ for every $i \in \N$ and 
if $\alpha_i \to \alpha$ in $d$ as $i \to \infty$. 
\end{remark}

Again, let $\alpha=\Set{A_1,A_2,\dots,A_m}$ be a partition of $Y$. 
The set $\mathfrak{A}_\alpha$ is now regarded as a (finite) alphabet, and so, its elements are 
sometimes called letters. Let $\Lambda$ and $(\mathfrak{A}_\alpha)^\ast$ denote the empty word and 
the set of (possibly empty) words over the alphabet $\mathfrak{A}_\alpha$, respectively. An infinite 
product space $(\mathfrak{A}_\alpha)^\Z$ is a Cantor set under the product topology of discrete 
topologies. The family of {\em cylinder subsets} is a base for the topology. A cylinder subset 
$[u.v]$ of $(\mathfrak{A}_\alpha)^\Z$ is associated with words $u,v \in (\mathfrak{A}_\alpha)^\ast$ 
in such a way that 
\[
[u.v] = 
\Set{x=(x_i)_{i \in \Z} \in (\mathfrak{A}_\alpha)^\Z|x_{[-|u|,|v|)} 
:= x_{-|u|}x_{-|u|+1} \dots x_{|v|-1}=uv}, 
\]
where $|u|$ is the length of the word $u$. A cylinder subset $[\Lambda.v]$ is abbreviated to $[v]$. 
Consider a homeomorphism 
$S_\alpha:(\mathfrak{A}_\alpha)^\Z \to (\mathfrak{A}_\alpha)^\Z,(x_i)_{i \in \Z} 
\mapsto (x_{i+1})_{i \in \Z}$, which is the so-called left shift on $(\mathfrak{A}_\alpha)^\Z$. 
Define a map $\phi_\alpha:Y \to (\mathfrak{A}_\alpha)^\Z$ by 
$T^iy \in A_{\phi_\alpha(y)_i}$ for every $i \in \Z$. The map $\phi_\alpha$ is measurable. 
Since for any $i \in \Z$ and $y \in Y$, a point $T^{i+1}y$ belongs to both of atoms 
$A_{\phi_\alpha(Ty)_i}$ and $A_{\phi_\alpha(y)_{i+1}}$, we obtain that 
$\phi_\alpha(Ty)=S_\alpha \phi_\alpha(y)$ for all $y \in Y$. Set 
$\hat{\lambda_\alpha}=\mu \circ {\phi_\alpha}^{-1}$, which is an $S_\alpha$-invariant, infinite Borel 
measure on $(\mathfrak{A}_\alpha)^\Z$. Set $\hat{X_\alpha}=\supp(\hat{\lambda_\alpha})$. Since 
$\hat{\lambda_\alpha}$ is $S_\alpha$-invariant, so is $\hat{X_\alpha}$. Set 
\begin{align}
\L(\alpha) &= \Set{u \in (\mathfrak{A}_\alpha)^\ast| \bigcap_{i=1}^{|u|}T^{-(i-1)}A_{u_i} \ne 
\emptyset}; \label{itinerary_words} \\
\L(\alpha)^\prime &= \Set{u \in \L(\alpha) | u \ne 1^{|u|} :=
\underbrace{11 \dots 1}_{|u| \textrm{ letters}}}; \notag \\
\L_n(\alpha)^\prime &= \Set{u \in \L(\alpha)^\prime | \vert u \vert =n}, \notag
\end{align}
where we have to recall that ``a measurable set is nonempty'' should be interpreted as ``a 
measurable set has a positive measure''. 
Since it follows from definition of $\supp(\hat{\lambda_\alpha})$ that given a sequence 
$x=(x_i)_{i \in \Z} \in (\mathfrak{A}_\alpha)^\Z$, 
\begin{equation}\label{meas_fact_subshift}
x \in \hat{X_\alpha} \Leftrightarrow x_{[-n,n)} \in \L(\alpha) \textrm{ for all } n \in \N, 
\end{equation}
we see that a sequence $1^\infty$, which consists of the letter $1$ only, belongs to $X_\alpha$. 
Since $T$ is ergodic, we obtain that 
$\hat{\lambda_\alpha}(\Set{1^\infty})=\mu\left(\bigcap_{i \in \Z} T^{-i}A_1\right)=0$. In addition, 
it is easy to see that $0 < \hat{\lambda_\alpha}([u.v]_{X_\alpha})<\infty$ whenever 
$uv \in \L(\alpha)^\prime$, where $[u.v]_{X_\alpha}=[u.v] \cap X_\alpha$. Hence, 
$\hat{\lambda_\alpha}$ is $\sigma$-finite. Let $\hat{\mathbf{X}_\alpha}$ denote a subshift 
$(\hat{X_\alpha},\hat{S_\alpha})$, where $\hat{S_\alpha}$ is the restriction of the left shift 
$S_\alpha$ to $\hat{X_\alpha}$. The map $\phi_\alpha$ is a factor map from the system $\mathbf{Y}$ to 
an ergodic, infinite measure-preserving system $(\hat{X_\alpha},\hat{\lambda_\alpha},\hat{S_\alpha})$. 
To see that $\hat{\lambda_\alpha}$ is non-atomic, let $x \in \hat{X_\alpha}$ be arbitrary. We may 
assume that $x \in \hat{X_\alpha} \setminus \Set{1^\infty}$, so that 
$\hat{\lambda_\alpha}(\Set{x}) < \infty$. If $x$ is shift-periodic, i.e.\ $\hat{S_\alpha}^px=x$ for 
some $p \in \N$, then $\bigcup_{i=0}^{p-1}T^i{\phi_\alpha}^{-1}\Set{x}$ is a $T$-invariant set, 
because $T^p {\phi_\alpha}^{-1}\Set{x}={\phi_\alpha}^{-1}\Set{x}$, and hence, 
$0 \le \hat{\lambda_\alpha}(\Set{x})=\mu({\phi_\alpha}^{-1}\Set{x}) \le 
\mu\left(\bigcup_{i=0}^{p-1}T^i{\phi_\alpha}^{-1}\Set{x}\right)=0$ since $T$ is ergodic. If $x$ is 
aperiodic, then $W:={\phi_\alpha}^{-1}\Set{x}$ is a {\em wandering set} for $T$, i.e.\ 
$T^iW \cap T^jW = \emptyset$ if $i \ne j$. Since $T$ is conservative \cite[Proposition~1.2.1]{Aa}, 
we have $\hat{\lambda_\alpha}(\Set{x})=\mu(W)=0$. 

Suppose that partitions $\alpha=\Set{A_1,\dots,A_m}$ and $\beta=\Set{B_1,\dots,B_n}$ have a relation 
$\beta \succ \alpha$. Define a map: 
\begin{equation}\label{map_f}
f_{\alpha,\beta}:\mathfrak{A}_\beta \to \mathfrak{A}_\alpha
\end{equation}
by $B_j \subset A_{f_{\alpha,\beta}(j)}$ for each $j \in \mathfrak{A}_\beta$. 
Define a map $\phi_{\alpha,\beta}:\hat{X_\beta} \to \hat{X_\alpha}, (x_i)_{i \in \Z} \mapsto 
(f_{\alpha,\beta}(x_i))_{i \in \Z}$, which is a factor map from the subshift $\hat{\mathbf{X}_\beta}$ 
to the subshift $\hat{\mathbf{X}_\alpha}$. It is readily verified that 
$\phi_{\alpha,\beta}(1^\infty)=1^\infty$ and that 
$\hat{\lambda_\beta} \circ {\phi_{\alpha,\beta}}^{-1}=\hat{\lambda_\alpha}$ because 
$\phi_\alpha = \phi_{\alpha,\beta} \circ \phi_\beta$. It is clear from definition that 
$\phi_{\alpha,\beta}$ maps each cylinder subset of $X_\beta$ onto a cylinder subset of 
$\hat{X_\alpha}$, so that $\phi_{\alpha,\beta}$ is an open map. 

\begin{definition}[{\cite[Definition~4.1]{Y}}]
A set $A \in \B_0$ is said to be {\em uniform} relative to a set 
$K \in \B_0$ if to any number $\epsilon > 0$ corresponds $m \in \N$ such that for any point $y \in K$,
\begin{equation*}\label{def_uniform_rel}
T_n\mathbbm{1}_K(y) \ge m \Rightarrow \left| 
\frac{T_n\mathbbm{1}_A(y)}{T_n\mathbbm{1}_K(y)} - \frac{\mu(A)}{\mu(K)} \right| < \epsilon.
\end{equation*}

A partition $\alpha$ of $Y$ is said to be {\em uniform} if every set belonging to a family 
$\B_0 \cap \bigcup_{n=1}^\infty \alpha_{-n}^{n-1}$ is uniform relative to the finite support 
$K_\alpha$ of the partition $\alpha$. 

If a uniform partition $\alpha$ of $Y$ is such that for all 
$x \in \hat{X_\alpha} \setminus \{1^\infty\}$,
\begin{equation}\label{inf_often_hitting}
\# (\Orb_{\hat{S_\alpha}}(x) \cap K_{\phi_\alpha(\alpha)})= \infty,
\end{equation}
where $K_{\phi_\alpha(\alpha)}=\bigcup_{i \ne 1} \phi_\alpha(A_i)$, then the partition $\alpha$ is 
said to be {\em strictly uniform}. 
\end{definition}

It is easy to see that a partition $\alpha$ is uniform if and only if there exists a sequence 
$1 \le n_1<n_2< \ldots$ of integers for which every set belonging to a family 
$\B_0 \cap \bigcup_{k=1}^\infty \alpha_{-n_k+1}^{n_k-1}$ is uniform relative to $K_\alpha$. 

\begin{lemma}[{\cite[Lemma~4.2]{Y}}]\label{str_unf_iff_str_erg}
Let $\alpha$ be a partition of $Y$. Then, the partition $\alpha$ is strictly uniform if and only if 
the subshift $\hat{\mathbf{X}_\alpha}$ is almost minimal and bi-ergodic. 
\end{lemma}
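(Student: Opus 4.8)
The plan is to recast the statement inside the locally compact Cantor system $(X_\alpha,\hat{S_\alpha})$, where $X_\alpha := \hat{X_\alpha} \setminus \Set{1^\infty}$ carries the locally compact Cantor topology and $\hat{S_\alpha}$ is understood to be restricted to it. By the one-to-one correspondence between minimal, locally compact Cantor systems and almost minimal Cantor systems recorded in Section~\ref{intro}, together with the definition of bi-ergodicity, the subshift $\hat{\mathbf{X}_\alpha}$ is almost minimal and bi-ergodic if and only if $(X_\alpha,\hat{S_\alpha})$ is strictly ergodic, i.e.\ minimal and uniquely ergodic. So it suffices to prove that $\alpha$ is strictly uniform if and only if $(X_\alpha,\hat{S_\alpha})$ is strictly ergodic. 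Throughout I would work with the compact open set $K := K_{\phi_\alpha(\alpha)} = \Set{x \in X_\alpha | x_0 \ne 1}$, nonempty because $\alpha$ has a finite atom, together with the two elementary facts that $T_n \mathbbm{1}_{\phi_\alpha^{-1}(C)} = (S_n \mathbbm{1}_C)\circ \phi_\alpha$ for every cylinder $C$ and that $\phi_\alpha(K_\alpha) \subseteq K$ is a dense subset of $K$ (the latter because $\hat{X_\alpha} = \supp(\hat{\lambda_\alpha})$ forces every nonempty cylinder contained in $K$ to meet the image of $K_\alpha$).

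For the forward implication, assume $\alpha$ is strictly uniform. The hitting condition \eqref{inf_often_hitting} gives $\Orb_{\hat{S_\alpha}}(x) \cap K \ne \emptyset$ for every $x \in X_\alpha$, whence $\bigcup_{n \in \Z}\hat{S_\alpha}^n K = X_\alpha$, so the hypothesis of Lemma~\ref{char_unq} is met and its first condition is precisely \eqref{inf_often_hitting}. For its second condition I would verify, for an arbitrary compact open $A \subset X_\alpha$, the required ratio estimate: express $A$ as a finite disjoint union of symmetric cylinders, i.e.\ atoms of some $\alpha_{-n}^{n-1}$ whose words lie in $\L(\alpha)^\prime$ (none is the all-ones atom, since $1^\infty \notin A$); apply the uniformity of $\alpha$ to each such cylinder $C$, whose preimage lies in $\B_0$, to get convergence of $S_n\mathbbm{1}_C/S_n\mathbbm{1}_K$ to $\hat{\lambda_\alpha}(C)/\hat{\lambda_\alpha}(K)$ along points of $\phi_\alpha(K_\alpha)$; and sum these finitely many estimates. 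Because $S_n\mathbbm{1}_A$ and $S_n\mathbbm{1}_K$ are locally constant and $\phi_\alpha(K_\alpha)$ is dense in $K$, the estimate obtained on $\phi_\alpha(K_\alpha)$ propagates to all of $K$ with $c = \hat{\lambda_\alpha}(A)/\hat{\lambda_\alpha}(K)$. Lemma~\ref{char_unq} then yields unique ergodicity, and since the unique invariant Radon measure is proportional to $\hat{\lambda_\alpha}$, whose support is all of $X_\alpha$, the remark following Lemma~\ref{char_unq} gives minimality. Thus $(X_\alpha,\hat{S_\alpha})$ is strictly ergodic.

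For the converse, assume $(X_\alpha,\hat{S_\alpha})$ is strictly ergodic. Minimality makes every orbit dense, so it meets the nonempty clopen set $K$; since $X_\alpha$ has no isolated points, $K$ is infinite, and a dense orbit meeting an infinite clopen set only finitely often would have its closure avoid a nonempty open piece of $K$, a contradiction, so each orbit meets $K$ infinitely often. This is exactly \eqref{inf_often_hitting}, and it also gives $\bigcup_{n \in \Z} \hat{S_\alpha}^n K = X_\alpha$. Feeding unique ergodicity into Lemma~\ref{char_unq} furnishes, for every compact open $A$ and $\epsilon > 0$, the uniform ratio estimate on $K$; I would specialize $A$ to the cylinder $C$ attached to an atom of $\alpha_{-n}^{n-1}$ lying in $\L(\alpha)^\prime$ and transfer the estimate back through $\phi_\alpha$, using $\phi_\alpha(K_\alpha) \subseteq K$ and the transfer identity, to conclude that $\phi_\alpha^{-1}(C)$ is uniform relative to $K_\alpha$. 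The limiting constant must be the measure ratio $\mu(\phi_\alpha^{-1}(C))/\mu(K_\alpha)$ because the unique invariant measure is proportional to $\hat{\lambda_\alpha}$; this is exactly what the definition of uniformity demands, so $\alpha$ is uniform and hence strictly uniform.

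I expect the main obstacle to be the passage between the measure-theoretic uniformity, which is quantified over the genuine points $y \in K_\alpha \subset Y$, and the topological ratio condition of Lemma~\ref{char_unq}, which is quantified over all $x \in K$; the image $\phi_\alpha(K_\alpha)$ is only dense in $K$, so one must exploit the local constancy of the functions $S_n\mathbbm{1}_A$ and $S_n\mathbbm{1}_K$ to move the uniform bound to every point of $K$ while keeping a single pair $(c,m)$. A secondary bookkeeping difficulty is reducing an arbitrary compact open $A$ to symmetric cylinders of a common level $n$ and controlling the accumulated error when the finitely many uniform estimates are summed; pinning the constant $c$ down to $\hat{\lambda_\alpha}(A)/\hat{\lambda_\alpha}(K)$ in both directions is what ties the topological and measure-theoretic pictures together.
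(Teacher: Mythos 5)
The paper does not prove this lemma --- it is imported verbatim from \cite[Lemma~4.2]{Y} --- so there is no in-paper argument to compare against; your proof is correct and runs on exactly the machinery the paper sets up for this purpose: the reduction to strict ergodicity of $(X_\alpha,\hat{S_\alpha})$ via the minimal/almost-minimal correspondence, the criterion of Lemma~\ref{char_unq} applied to $K=\{x : x_0\ne 1\}$, and the transfer identity $T_n\mathbbm{1}_{\phi_\alpha^{-1}(C)}=(S_n\mathbbm{1}_C)\circ\phi_\alpha$ combined with density of $\phi_\alpha(K_\alpha)$ in $K$, which is the same technique used in the proof of Corollary~\ref{factor_of_unq_LCCM}. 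The one step you leave implicit --- pinning the constant $c$ of Lemma~\ref{char_unq} to $\mu(\phi_\alpha^{-1}(C))/\mu(K_\alpha)$ in the converse direction --- is supplied by Hopf's ratio ergodic theorem for the ergodic invariant measure and is routine.
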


\section{Towers}\label{towers}

Let $\mathbf{Y}=(Y,\B,\mu,T)$ be again an ergodic, infinite measure-preserving system. 
A partition $\mathfrak{t}=\Set{T^jB_i|1 \le i \le k,0 \le j < h_i}$ 
of $Y$ is called a {\em tower} with {\em base} $B(\mathfrak{t}) := 
\bigcup_{i=1}^k B_i$. An atom of the tower $\mathfrak{t}$ is called a {\em level}. 
Since the tower $\mathfrak{t}$ is required to be a partition of $Y$, there necessarily exists 
a unique $i_0$ for which $\mu(B_{i_0})=\infty$. Then, it is necessary that $h_{i_0}=1$. 
We may always assume that $i_0=1$. The unique level $B_1$ of infinite 
measure is referred to as an {\em infinite level} of $\mathfrak{t}$. Another tower 
$\mathfrak{t}^\prime$ is called a {\em refinement} of the tower $\mathfrak{t}$ if 
$B(\mathfrak{t}^\prime) \subset B(\mathfrak{t})$ and $\mathfrak{t}^\prime \succ \mathfrak{t}$. 

Each subfamily $\mathfrak{c}_i:=\Set{T^jB_i|0 \le j < h_i}$ 
is called a {\em column} of the tower $\mathfrak{t}$. We refer to $B_i$ and $h_i$ 
as the {\em base} and {\em height} of the column $\mathfrak{c}_i$, respectively. 
With abuse of terminology, we also say that $\bigcup_{j=0}^{h_i-1}T^jB_i$ is a 
column of $\mathfrak{t}$. The column $\mathfrak{c}_i$ is said to be {\em principal} if 
$i \ne 1$. Let $h(\mathfrak{t})$ denote the least value of heights of principal columns of 
$\mathfrak{t}$.

A set $\Set{T^jy|0 \le j < h_i}$ with $y \in B_i$ and $1 \le i \le k$ is called a {\em fiber} 
of the tower $\mathfrak{t}$. Any set of the form $\Set{T^iy|m \le i \le n}$ with $m \le n$ is called a 
{\em section} of the orbit $\Orb_T(y)$, so that any fiber is a section. The word 
$\phi_\alpha(y)_{[m,n]}$ over the alphabet $\mathfrak{A}_\alpha$ is called the $\alpha$-{\em name} 
of the section. Sections $\Set{T^iy|m \le i \le n}$ and 
$\Set{T^iy^\prime|m^\prime \le i \le n^\prime}$ are said to be {\em consecutive} if 
$T(T^ny)=T^{m^\prime}y^\prime$. 

Let $K \in \B_0$. We say that a tower $\mathfrak{t}$ is 
{\em $K$-standard}~\cite[Definition~2.1]{Y} if 
\begin{itemize}
\item
$K$ has a nonempty intersection with each principal column of $\mathfrak{t}$;
\item
$K$ is a union of levels included in the principal columns of $\mathfrak{t}$.
\end{itemize}
These conditions imply that the number of those points in a fiber which belong to the set $K$ is 
positive and constant on each column of the tower $\mathfrak{t}$ and that the number is zero for all 
the fibers on the infinite level of the tower $\mathfrak{t}$. Let $h_K(\mathfrak{t})$ and 
$H_K(\mathfrak{t})$ denote the least and greatest values of such numbers over fibers on principal 
columns of $\mathfrak{t}$, respectively. 

\begin{lemma}[{\cite[Proposition~2.5]{Y}}]\label{suff_tall}
If $C$ is a measurable subset of $K$, $0 < \epsilon < \mu(K)$ and $M \in \N$, then there exists 
$N \in \N$ such that if a $K$-standard tower $\mathfrak{t}$ satisfies $h(\mathfrak{t}) > N$ then the 
union of those fibers $\Set{T^jy|0 \le j < h_y}$ of the tower $\mathfrak{t}$, for which 
\[
\left| \frac{\sum_{i=0}^{h_y-1} \mathbbm{1}_C(T^iy)}
{\sum_{i=0}^{h_y-1} \mathbbm{1}_K(T^iy)} - 
\frac{\mu(C)}{\mu(K)}\right| < \epsilon \textrm{ and } \sum_{i=0}^{h_y-1}\mathbbm{1}_K(T^iy) \ge M,
\]
covers at least $\mu(K)-\epsilon$ of $K$ in measure. 
\end{lemma}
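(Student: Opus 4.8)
The plan is to pass to the induced transformation on $K$ and read the statement as a uniform estimate on Birkhoff averages. Let $T_K\colon K\to K$ be the first-return map of $T$ to $K$; since $T$ is conservative and ergodic and $0<\mu(K)<\infty$, the map $T_K$ is an ergodic, measure-preserving automorphism of the finite space $(K,\mu|_K)$. For a principal column of $\mathfrak t$ with base $B_i$, height $h_i$ and $y\in B_i$, the points of the fiber lying in $K$ are exactly $x_1,T_Kx_1,\dots,T_K^{n_i-1}x_1$, where $x_1$ is the lowest $K$-level point of the fiber and $n_i=\sum_{j=0}^{h_i-1}\mathbbm 1_K(T^jy)$ is constant along the column; hence the $K$-levels of $\mathfrak t$ form a Rokhlin tower for $T_K$ of heights $n_i$. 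With $\bar c=\mu(C)/\mu(K)$, the ratio tested in the lemma is precisely the $T_K$-Birkhoff average $\tfrac1{n_i}\sum_{l=0}^{n_i-1}\mathbbm 1_C(T_K^lx_1)$, and the second condition reads $n_i\ge M$. So it suffices to bound, uniformly over all $K$-standard $\mathfrak t$ with $h(\mathfrak t)>N$, the $\mu|_K$-measure of those $K$-levels whose fiber violates a condition. First I would fix the ergodic data: by Birkhoff's theorem applied to $T_K$ and $T_K^{-1}$ and by Egorov's theorem on $K$, choose $G\subset K$ with $\mu(K\setminus G)<\epsilon/6$ and $N_2\ge M$ so that for all $w\in G$ and all $q\ge N_2$ both $\tfrac1q\sum_{s=0}^{q-1}\mathbbm 1_C(T_K^sw)$ and $\tfrac1q\sum_{s=1}^{q}\mathbbm 1_C(T_K^{-s}w)$ lie within $\epsilon$ of $\bar c$; then fix $N_3$ with $2N_2\mu(K)/N_3<\epsilon/6$.

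Next I would split the principal columns into \emph{short} ones ($n_i<N_3$) and \emph{tall} ones ($n_i\ge N_3$) and simply discard all short columns. The place where the hypothesis $h(\mathfrak t)>N$ enters is the control of the short columns: a short column is $T$-long but $K$-sparse, since each of its $K$-points $z$ sits in a fiber of $T$-length $h_i>N$ containing fewer than $N_3$ points of $K$, so in the longer of the two directions $z$ has a $T$-orbit segment of length at least $\lfloor N/2\rfloor$ that meets $K$ fewer than $N_3$ times. Thus every such $z$ lies in the fixed set $A_N^+\cup A_N^-$, where $A_N^{\pm}=\Set{w\in K|\#\Set{1\le s\le\lfloor N/2\rfloor|T^{\pm s}w\in K}<N_3}$. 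Because $T$ and $T^{-1}$ are conservative, $\#\Set{1\le s\le L|T^{\pm s}w\in K}\to\infty$ for a.e.\ $w\in K$, so $\mu(A_N^{\pm})\to0$ as $N\to\infty$ for this fixed $N_3$; choosing $N$ large makes the total $K$-mass of all short columns at most $\epsilon/3$, uniformly in $\mathfrak t$ since $A_N^{\pm}$ do not depend on $\mathfrak t$.

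For a tall column I would use a two-sided splitting of the fiber average. If $z=T_K^px_1$ is the $K$-point at $T_K$-level $p$ of a fiber of height $n=n_i$, then the fiber average equals the convex combination $\tfrac pn\bigl(\tfrac1p\sum_{s=1}^{p}\mathbbm 1_C(T_K^{-s}z)\bigr)+\tfrac{n-p}{n}\bigl(\tfrac1{n-p}\sum_{s=0}^{n-1-p}\mathbbm 1_C(T_K^sz)\bigr)$, so if the fiber fails the ratio condition then either the backward average of length $p$ or the forward average of length $n-p$ is $\epsilon$-bad. When the offending average has length at least $N_2$ this forces $z\notin G$; otherwise $z$ lies within $N_2$ levels of the top or the bottom of its column. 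Hence the bad $K$-points of tall columns are contained in $(K\setminus G)\cup\mathcal E$, where $\mathcal E$ gathers the at most $2N_2$ extreme levels of each tall column. Since tall columns satisfy $n_i\ge N_3$, their bases obey $\sum\mu(P_i)\le\mu(K)/N_3$, so $\mu(\mathcal E)\le 2N_2\mu(K)/N_3<\epsilon/6$, while $\mu(K\setminus G)<\epsilon/6$; moreover $n_i\ge N_3>N_2\ge M$ makes the second condition automatic on tall columns. Adding the short-column bound $\epsilon/3$ yields total bad mass below $\epsilon$, which is the claim.

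I expect the main obstacle to be precisely this uniformity over all admissible towers: the naive estimate weights each bad base by its column height $n_i$, which is unbounded, so mere convergence in measure of the Birkhoff averages does not suffice. The two devices that defeat the weights are the conservativity argument that controls the short columns through the hypothesis $h(\mathfrak t)>N$, and the separation of scales $N_3\gg N_2$ that renders the extreme-level set $\mathcal E$ negligible compared with the tall columns it borders.
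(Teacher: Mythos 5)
Your proof is correct. Note that the paper itself does not prove Lemma~\ref{suff_tall}; it is imported verbatim from \cite[Proposition~2.5]{Y}, so there is no in-text argument to compare against. Your reduction to the induced map $T_K$ (using that, since $K$ is a union of levels, the $K$-points of a fiber are consecutive $T_K$-iterates and their number is constant on each column, so the tested ratio is a $T_K$-Birkhoff average) is exactly the natural route, and the two devices you isolate are the ones that make the estimate uniform over towers: the sets $A_N^{\pm}$, which depend only on $N$ and $N_3$ and absorb the $K$-mass of the $K$-sparse columns via conservativity of $T$ and $T^{-1}$ (this is where $h(\mathfrak{t})>N$ is genuinely used), and the scale separation $N_3\gg N_2$, which controls the extreme-level set $\mathcal{E}$ after the two-sided convex-combination splitting around an arbitrary $K$-point $z$ of a bad fiber. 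The bookkeeping ($\epsilon/3+\epsilon/6+\epsilon/6<\epsilon$, and $n_i\ge N_3>N_2\ge M$ making the second condition automatic on tall columns) checks out, so the argument is complete and self-contained.
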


Apart from measure-preserving systems, we then discuss towers for an almost minimal Cantor system 
$\hat{\mathbf{Z}}=(\hat{Z},\hat{U})$, which is the one-point compactification of a minimal, locally 
compact Cantor system $\mathbf{Z}=(Z,U)$. Put $\hat{Z}=Z \cup \Set{\omega_Z}$ as done in 
Section~\ref{se}, which is a Cantor set. Take a clopen neighborhood $C$ of $\omega_Z$ so that $C \ne 
\hat{Z}$. Since 
$\omega_Z$ is an accumulation point of the forward orbit $\Set{\hat{U}^kz|k \in \N}$ of any point 
$z \in \hat{Z}$~\cite[Theorem~1.1]{HPS}, the {\em return time function}: 
\[
r_C:C \to \N, z \mapsto \min\Set{k \in \N|{\hat{U}}^kz \in C}
\]
is well-defined. For each $k \in \N$, an inverse image: 
\[
{r_C}^{-1}\Set{k}=\left(C \cap {\hat{U}}^{-k}C\right) \setminus \bigcup_{i=1}^{k-1}{\hat{U}}^{-i}C
\]
is open as the right hand side looks. Since the clopen set $C$ is compact, the function $r_C$ is 
bounded. Then, put $r_C(C)=\Set{h_1 < h_2 < \dots < h_c}$. Since $C$ includes a fixed point $\omega_Z$ 
of $\hat{U}$, it is necessary that $h_1=1$. Since $\hat{Z} \setminus C \ne \emptyset$, the almost 
minimality of $\hat{U}$ implies that $C \setminus {\hat{U}}^{-1}(C) \ne \emptyset$, so that we 
must have $c \ge 2$. It is readily verified that 
\begin{equation}\label{K-R}
\P=\Set{{\hat{U}}^jC_i| 0 \le j < h_i, 1 \le i \le c},
\end{equation}
where $C_i={r_C}^{-1}\Set{h_i}$, is a family of mutually disjoint, nonempty, clopen subsets of 
$\hat{Z}$. It is a partition of $\hat{Z}$ in the usual sense, because a clopen subset 
$\bigcup_{A \in \P}A$ of $\hat{Z}$ is $\hat{U}$-invariant. A partition of the form \eqref{K-R} is 
called a {\em Kakutani-Rohlin partition}~\cite{HPS} (abbreviated to K-R partition) of the almost 
minimal Cantor system $\hat{\mathbf{Z}}$. The above discussion guarantees that every almost minimal 
Cantor system possesses a K-R partition. 

We do not require in general that $h_1,\dots,h_c$ are mutually distinct, but always require that 
$h_1=1$ (and hence $\omega_Z \in C_1$). A K-R partition is topological counterpart of a tower for a 
measure-preserving system, as the clopen set $C_1$ corresponds to the infinite level of a tower. It is 
possible to naturally import terminology and notation defined in the category of measure-theoretical 
systems into that of almost minimal Cantor systems. For example, each subfamily 
$\Set{{\hat{U}}^jC_i|0 \le j < h_i}$ is called a column, and the clopen set $C_1$ is called an 
infinite level. 

We then discuss symbolic factors of the almost minimal Cantor system $\hat{\mathbf{Z}}$. Suppose 
that $\alpha=\Set{A_1,A_2,\dots,A_m}$ with $m \ge 2$ is a partition of $\hat{Z}$ into nonempty, 
clopen subsets such that $\omega_Z \in A_1$. Actually, this is a definition of a {\em partition} of 
$\hat{Z}$. The atom $A_1$ corresponds to the infinite atom of a partition of a measure space, which 
is discussed in Section~\ref{part_symbl}. 
So, we let $K_\alpha$ denote the {\em finite support} $\bigcup_{i = 2}^m A_i$ of 
the partition $\alpha$. Define $\L(\alpha)$ by literally interpreting 
\eqref{itinerary_words} after replacing $T$ with $\hat{U}$. With abuse of notation, define a subshift: 
\begin{equation}\label{top_fact_subshift}
\hat{X_\alpha}=\Set{x=(x_i)_{i \in \Z} \in (\mathfrak{A}_\alpha)^\Z|x_{[-n,n)} \in \L(\alpha) 
\textrm{ for all } n \in \N},
\end{equation}
where $\mathfrak{A}_\alpha=\Set{1,2,\dots,m}$ again. Define a factor map 
$\phi_\alpha:\hat{Z} \to \hat{X_\alpha}$ by ${\hat{U}}^iz \in A_{\phi_\alpha(z)_i}$ 
for all $i \in \Z$. We can also define a factor map 
$\phi_{\alpha,\beta}:\hat{X_\beta} \to \hat{X_\alpha}$ as in Section~\ref{part_symbl} if $\beta 
\succ \alpha$. The subshift $\hat{\mathbf{X}_\alpha}:=(\hat{X_\alpha},\hat{S_\alpha})$ is almost 
minimal; recall the proof of Corollary~\ref{factor_of_unq_LCCM}. 

We then discuss K-R partitions of the subshift $\hat{\mathbf{X}_\alpha}$ which are associated with 
{\em return words}; see \cite{DHS,Du} for the minimal subshifts and \cite{Y4,Y5} for the almost 
minimal subshifts. They induce K-R partitions of the almost minimal Cantor system $\hat{\mathbf{Z}}$. 
Choose $u,v \in \L(\alpha)$ so that $uv \in \L(\alpha) \setminus \Set{\Lambda}$. A word 
$w \in \L(\alpha)$ is called a return word to $u.v$ if $uwv \in \L(\alpha)$ and in addition $uv$ 
occurs in $uwv$ exactly twice as prefix and suffix. Let $\mathcal{R}_n(\alpha)$ denote the set of 
return words to $1^n.1^n$. It is clear that $1 \in \mathcal{R}_n(\alpha)$. In virtue of 
\cite[Section~3]{Y4} and \cite[Lemma~5.2]{Y5}, we know that 
\begin{itemize}
\item
$\mathcal{R}_n(\alpha)$ is a finite set for all $n \in \N$;
\item
for every $n \in \N$, each word $w \in \mathcal{R}_{n+1}(\alpha)$ has a unique decomposition into 
words belonging to $\mathcal{R}_n(\alpha)$;
\item
if $w \in \mathcal{R}_n(\alpha) \setminus \Set{1}$, then $|w| > 2n$, $w_{[1,n]}=1^n, w_{n+1} \ne 1$ 
and $w_{[|w|-(n-1),|w|]}=1^n, w_{|w|-n} \ne 1$. 
\item
\begin{equation}\label{num_of_not_1_to_inf}
\lim_{n \to \infty}\min \Set{ \vert w \vert_{\neg 1}| w \in \mathcal{R}_n(\alpha) \setminus \Set{1}} 
= +\infty,
\end{equation}
where  $|w|_{\neg 1} = \# \Set{1 \le i \le \vert w \vert | w_i \in \mathfrak{A}_\alpha \setminus 
\Set{1}}$. 
\end{itemize}
To see \eqref{num_of_not_1_to_inf}, assume that it is not the case. Find a sequence 
$x \in \hat{X_\alpha}$ of the form $1^\infty.w1^\infty$ with a word $w$ whose first and last letter is 
neither $1$, where the dot means a separation between the nonnegative and negative coordinates. Since 
the only accumulation point of $\Orb_{\hat{S_\alpha}}(x)$ is $1^\infty$,  we obtain that 
$\hat{X_\alpha}=\Orb_{\hat{S_\alpha}}(x) \cup \Set{1^\infty}$, 
because $\hat{\mathbf{X}_\alpha}$ is almost minimal. This leads to a contradiction that $\mathbf{Z}$ 
has a nonempty open wandering set. 

Now, it is readily verified that for each $n \in \N$, 
\[
\P_n(\alpha) := \Set{{\hat{S_\alpha}}^j([1^n.w1^n]_{\hat{X_\alpha}})|w \in \mathcal{R}_n(\alpha), 
0 \le j < |w|}
\]
is a K-R partition of $\hat{\mathbf{X}_\alpha}$ with base 
$B(\P_n(\alpha))=[1^n.1^n]_{\hat{X_\alpha}}$. Set 
\begin{align}
\tilde{\mathfrak{t}}_n(\alpha) &={\phi_\alpha}^{-1}\P_n(\alpha) 
:=\Set{{\phi_\alpha}^{-1}(A)|A \in \P_n(\alpha)} \notag \\
&= \Set{{\hat{U}}^j {\phi_\alpha}^{-1}([1^n.w1^n]_{\hat{X_\alpha}}) | 
w \in \mathcal{R}_n(\alpha), 0 \le j < |w|}, \label{actual_shape_of_tnalpha}
\end{align}
which is a K-R partition of $\hat{\mathbf{Z}}$. Observe that 
\begin{equation}\label{inv_img_KR}
{\phi_\alpha}^{-1}([1^n.w1^n]_{\hat{X_\alpha}}) =
\bigcap_{i=-n}^{\vert w \vert +n-1}{\hat{U}}^{-i}A_{(1^nw1^n)_{i+n+1}} \textrm{ and } 
B(\tilde{\mathfrak{t}}_n(\alpha)) = \bigcap_{j=-n}^{n-1} {\hat{U}}^{-j}A_1.
\end{equation}
For every $n \in \N$, 
$\tilde{\mathfrak{t}}_{n+1}(\alpha)$ is a refinement of $\tilde{\mathfrak{t}}_n(\alpha)$, 
because 
\begin{itemize}
\item
$B(\tilde{\mathfrak{t}}_{n+1}(\alpha))=
{\phi_\alpha}^{-1}([1^{n+1}.1^{n+1}]_{\hat{X_\alpha}}) \subset 
{\phi_\alpha}^{-1}([1^n.1^n]_{\hat{X_\alpha}})= B(\tilde{\mathfrak{t}}_n(\alpha))$; 
\item
letting $w=u_1u_2 \dots u_k \ (u_i \in \mathcal{R}_n(\alpha))$ be a unique decomposition of a given 
word $w \in \mathcal{R}_{n+1}(\alpha)$, we have that if $|u_1 \dots u_{i-1}| \le j < |u_1 \dots u_i|$ 
for $1 \le i \le k$ then ${\hat{S_\alpha}}^j([1^{n+1}.w1^{n+1}]_{\hat{X_\alpha}}) \subset 
{\hat{S_\alpha}}^{j- |u_1 \dots u_{i-1}|}([1^n.u_i1^n]_{\hat{X_\alpha}})$ and hence 
${\hat{U}}^j {\phi_\alpha}^{-1}([1^{n+1}.w1^{n+1}]_{\hat{X_\alpha}}) \subset {\hat{U}}^{j- |u_1 
\dots u_{i-1}|} {\phi_\alpha}^{-1}([1^n.u_i1^n]_{\hat{X_\alpha}})$, 
where we use a convention that $|u_1 \dots u_0|=0$.
\end{itemize}

\begin{lemma}\label{name_const_tall_tower}
The refining sequence $\Set{\tilde{\mathfrak{t}}_n(\alpha)}_{n \in \N}$ of $K_\alpha$-standard 
K-R partitions of the almost minimal Cantor system $\hat{\mathbf{Z}}$ satisfies that 
\begin{enumerate}
\item\label{proper_heights_to_inf}
$h_{K_\alpha}(\tilde{\mathfrak{t}}_n(\alpha)) \to + \infty$ as $n \to \infty;$
\item\label{constant_names}
for each $n \in \N$, the $\alpha$-names of fibers of $\tilde{\mathfrak{t}}_n(\alpha)$ are 
constant on each column.
\end{enumerate}

In addition, if a partition $\beta$ of $\hat{Z}$ implements a relation that $\beta \succ \alpha$, 
then for each $n \in \N$, $\tilde{\mathfrak{t}}_n(\beta)$ is a refinement of 
$\tilde{\mathfrak{t}}_n(\alpha)$. 
\end{lemma}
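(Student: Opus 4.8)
The plan is to read everything directly off the explicit description \eqref{inv_img_KR} of the base and the atoms of $\tilde{\mathfrak{t}}_n(\alpha)$. I would start with assertion \eqref{constant_names}. A point $z$ in the base $\phi_\alpha^{-1}([1^n.w1^n]_{\hat{X_\alpha}})$ of the column indexed by a return word $w$ satisfies $\phi_\alpha(z)_{[-n,|w|+n)}=1^nw1^n$, whence $\phi_\alpha(z)_j=w_{j+1}$ and so $\hat{U}^jz\in A_{w_{j+1}}$ for $0\le j<|w|$. Thus the $\alpha$-name of the fiber $\{\hat{U}^jz : 0\le j<|w|\}$ is precisely the word $w$, independently of the chosen $z$; this is \eqref{constant_names}, and at the same time it records that the whole level $\hat{U}^j\phi_\alpha^{-1}([1^n.w1^n]_{\hat{X_\alpha}})$ is contained in the single atom $A_{w_{j+1}}$ of $\alpha$.

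Next I would deduce $K_\alpha$-standardness together with \eqref{proper_heights_to_inf}. Each level lies in $A_1$ when $w_{j+1}=1$ and in $K_\alpha=\bigcup_{i=2}^{m}A_i$ when $w_{j+1}\ne1$, so $K_\alpha$ is a union of levels, and a level meets $K_\alpha$ exactly when $w_{j+1}\ne1$, which forces $w\ne1$, i.e.\ the level belongs to a principal column. The number of points of a fiber over the column $w$ lying in $K_\alpha$ therefore equals $|w|_{\neg 1}$; by the return-word properties recalled before the lemma this number is at least $1$ for every $w\in\mathcal{R}_n(\alpha)\setminus\{1\}$ (as then $w_{n+1}\ne1$) and is $0$ for $w=1$. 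This yields both defining conditions of $K_\alpha$-standardness, and, combined with the K-R and refining properties already established above, exhibits $\{\tilde{\mathfrak{t}}_n(\alpha)\}_{n\in\N}$ as a refining sequence of $K_\alpha$-standard K-R partitions. Moreover $h_{K_\alpha}(\tilde{\mathfrak{t}}_n(\alpha))=\min\{|w|_{\neg 1} : w\in\mathcal{R}_n(\alpha)\setminus\{1\}\}$, so \eqref{proper_heights_to_inf} is exactly \eqref{num_of_not_1_to_inf}.

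For the final assertion, I would first observe that $f_{\alpha,\beta}(1)=1$: since $\omega_Z\in B_1\cap A_1$ and $B_1\subset A_{f_{\alpha,\beta}(1)}$, disjointness of the atoms of $\alpha$ forces $f_{\alpha,\beta}(1)=1$, hence $B_1\subset A_1$. This gives the base containment $B(\tilde{\mathfrak{t}}_n(\beta))=\bigcap_{j=-n}^{n-1}\hat{U}^{-j}B_1\subset\bigcap_{j=-n}^{n-1}\hat{U}^{-j}A_1=B(\tilde{\mathfrak{t}}_n(\alpha))$. It then remains to prove that every atom of $\tilde{\mathfrak{t}}_n(\beta)$ is contained in an atom of $\tilde{\mathfrak{t}}_n(\alpha)$. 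Using $\phi_\alpha=\phi_{\alpha,\beta}\circ\phi_\beta$ and pulling back along $\phi_\beta$, I would reduce this to the claim that $\phi_{\alpha,\beta}$ carries each atom of $\P_n(\beta)$ into a single atom of $\P_n(\alpha)$; writing an atom of $\P_n(\beta)$ as $\hat{S_\beta}^j([1^n.w'1^n]_{\hat{X_\beta}})$ and putting $\tilde{w}=f_{\alpha,\beta}(w')$, its image lies in $\hat{S_\alpha}^j([1^n.\tilde{w}1^n]_{\hat{X_\alpha}})$.

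I expect the main obstacle to be precisely this last claim. The word $\tilde{w}$ need not be a return word to $1^n.1^n$ for $\alpha$, since letters of $w'$ that are non-$1$ for $\beta$ may become $1$ for $\alpha$, creating internal runs $1^n$ and splitting $\tilde{w}$ into several return words; so a direct column-to-column matching is unavailable. Instead I would show that membership of a point of $\hat{S_\alpha}^j([1^n.\tilde{w}1^n]_{\hat{X_\alpha}})$ in a prescribed atom of $\P_n(\alpha)$ is determined by a finite window of coordinates. Because $f_{\alpha,\beta}(1)=1$ and $w'$ begins and ends with $1^n$, the word $\tilde{w}$ likewise begins and ends with $1^n$; consequently the coordinates $-j$ and $|w'|-j$ are genuine base positions, they straddle the coordinate $0$, and all coordinates between them are prescribed by $\tilde{w}$. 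Hence the return-word parse around $0$---the consecutive base positions enclosing $0$, the return word read between them, and the corresponding height---depends only on $\tilde{w}$ and $j$, not on the particular point. This places the entire image $\hat{S_\alpha}^j([1^n.\tilde{w}1^n]_{\hat{X_\alpha}})$ in one atom of $\P_n(\alpha)$, completing the reduction and thus the proof.
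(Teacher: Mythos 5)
Your proof is correct and follows essentially the same route as the paper: parts (1) and (2) are read off the explicit form of the levels exactly as in the paper, and the refinement claim rests on $f_{\alpha,\beta}(1)=1$ plus the fact that $f_{\alpha,\beta}(w')$ decomposes into $\alpha$-return words. Your ``parse around $0$'' argument is just an implicit version of the paper's explicit factorization $f_{\alpha,\beta}(w')=u_1u_2\cdots u_k$ with $u_i\in\mathcal{R}_n(\alpha)$, and if anything you justify more carefully why that decomposition localizes each level of $\P_n(\beta)$ inside a single level of $\phi_{\alpha,\beta}^{-1}\P_n(\alpha)$.
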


\begin{proof}
Property~\eqref{proper_heights_to_inf} follows from \eqref{num_of_not_1_to_inf}. 
Property~\eqref{constant_names} follows from \eqref{actual_shape_of_tnalpha} and the first half of 
\eqref{inv_img_KR}. Actually, each word $w \in \mathcal{R}_n(\alpha)$ is the $\alpha$-name of fibers 
on the column with base ${\phi_\alpha}^{-1}([1^n.w1^n]_{\hat{X_\alpha}})$. 

Then, let $\beta$ be as in the hypothesis. Since $f_{\alpha,\beta}(1)=1$, we have that 
$[1^n.1^n]_{\hat{X_\beta}} \subset {\phi_{\alpha,\beta}}^{-1}([1^n.1^n]_{\hat{X_\alpha}})$, so that 
$B(\tilde{\mathfrak{t}}_n(\beta)) ={\phi_\beta}^{-1}([1^n.1^n]_{\hat{X_\beta}}) \subset 
{\phi_\alpha}^{-1}( [1^n.1^n]_{\hat{X_\alpha}}) = B(\tilde{\mathfrak{t}}_n(\alpha))$, since 
$\phi_\alpha=\phi_{\alpha,\beta} \circ \phi_\beta$. What remains to be shown is that each level of 
$\P_n(\beta)$ is included in some level of ${\phi_{\alpha,\beta}}^{-1}\P_n(\alpha)$, since 
$\phi_\alpha=\phi_{\alpha,\beta} \circ \phi_\beta$. Since $f_{\alpha,\beta}(1)=1$, for any word 
$w \in \mathcal{R}_n(\beta)$, there exist unique words $u_1,u_2,\dots, u_k \in \mathcal{R}_n(\alpha)$ 
so that 
\begin{itemize}
\item
$|w|=|u_1u_2 \dots u_k|$;
\item
$f_{\alpha,\beta}(w_i)=(u_1u_2 \dots u_k)_i$ for every integer $i$ with $1 \le i \le |w|$. 
\end{itemize}
It follows from the conditions that 
\[
[1^n.w1^n]_{\hat{X_\beta}} \subset {\phi_{\alpha,\beta}}^{-1}
\left([1^n.u_1u_2 \dots u_k1^n]_{\hat{X_\alpha}}\right),
\]
and hence, 
\[
{S_\beta}^j\left([1^n.w1^n]_{\hat{X_\beta}}\right) \subset {S_\beta}^{j- |u_1 \dots u_{i-1}|}
{\phi_{\alpha,\beta}}^{-1}\left([1^n.u_i1^n]_{\hat{X_\alpha}}\right)
\]
for any integer $j$ with $0 \le j < |w|$ if $|u_1 \dots u_{i-1}| \le j < |u_1 \dots u_i|$ for 
$1 \le i \le k$. This completes the proof. 
\end{proof}

\begin{remark}
In the proof of Lemma~\ref{name_const_tall_tower}, it is necessary that $k=1$ if $\beta \succcurlyeq 
\alpha$. 
\end{remark}

Next, take a countable base $\Set{D_1,D_2,\dots}$ for the topology of the locally compact Cantor set 
$Z$ which consists of compact open subsets of $Z$. Form a sequence $\Set{\alpha_i}_{i \in \N}$ of 
partitions of $\hat{Z}$ by setting 
$\alpha_i=\Set{\hat{Z} \setminus D_1,D_1} \vee \Set{\hat{Z} \setminus D_2,D_2} \vee \dots \vee 
\Set{\hat{Z} \setminus D_i,D_i}$. Clearly, for each $i \in \N$, the partition $\alpha_{i+1}$ refines 
the partition $\alpha_i$, i.e.\ $\alpha_{i+1} \succ \alpha_i$. 
Set $\tilde{\mathfrak{t}}_i={\phi_{\alpha_i}}^{-1}\P_1(\alpha_i)$. 
In view of Lemma~\ref{name_const_tall_tower}, we have a refining sequence 
$\Set{\tilde{\mathfrak{t}}_i}_{i \in \N}$ of K-R partitions of the almost minimal Cantor system 
$\hat{\mathbf{Z}}$. The construction shows that 
\begin{itemize}
\item
for all $i \in \N$ and integer $j$ with $j \ge i$, the compact open subset $D_i$ of $Z$ is a union of 
levels in the principal columns of $\tilde{\mathfrak{t}}_j$;
\item
each principal column of $\tilde{\mathfrak{t}}_i$ has a nonempty intersection with $D_1$, because 
it is clear for $\tilde{\mathfrak{t}}_1$ and $\tilde{\mathfrak{t}}_i$ is a refinement of 
$\tilde{\mathfrak{t}}_1$ for each $i \in \N$.
\end{itemize}
Set $\gamma_1 = \tilde{\mathfrak{t}}_1$. Let $K_{\gamma_1}$ be as above, i.e.\ 
the union of principal columns of $\tilde{\mathfrak{t}}_1$. For each integer $i$ with $i > 1$, set 
$\gamma_i = \Set{\hat{Z} \setminus K_{\gamma_1},C| C \in \tilde{\mathfrak{t}}_i, C \subset 
K_{\gamma_1}}$. 
\begin{lemma}\label{seq_gen_top}
The sequence $\Set{\gamma_i}_{i \in \N}$ of partitions of $\hat{Z}$ into clopen subsets satisfies that
\begin{itemize}
\item
$\gamma_1 \preccurlyeq \gamma_2 \preccurlyeq \dots;$
\item
$\bigcup_{i,j \in \N} (\gamma_i)_{-j}^{j-1}$ is a base for the topology of $\hat{Z}$.
\end{itemize}
\end{lemma}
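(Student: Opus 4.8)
\emph{Overview.} The plan is to treat the two bullets separately. The first is immediate from the tower combinatorics already recorded in Lemma~\ref{name_const_tall_tower}; the second I would reduce to the \emph{manifestly} correct case of the partitions $\alpha_i$ by establishing, for each $i$, a refinement chain $(\gamma_i)_{-j}^{j-1}\succ\tilde{\mathfrak{t}}_i\succ\alpha_i$ for $j$ large.

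\emph{First bullet.} All the $\gamma_i$ share the finite support $K_{\gamma_1}$: for $i>1$ the infinite atom of $\gamma_i$ is $\hat{Z}\setminus K_{\gamma_1}$ by definition, while the infinite atom of $\gamma_1=\tilde{\mathfrak{t}}_1$ is the infinite level of $\tilde{\mathfrak{t}}_1$, whose complement is exactly $K_{\gamma_1}$. Applying the last clause of Lemma~\ref{name_const_tall_tower} with $n=1$ to $\alpha_1\prec\alpha_2\prec\cdots$ yields $\tilde{\mathfrak{t}}_1\prec\tilde{\mathfrak{t}}_2\prec\cdots$, so $K_{\gamma_1}$, being a union of atoms of $\tilde{\mathfrak{t}}_1$, is a union of atoms of every $\tilde{\mathfrak{t}}_i$. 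Hence each atom of $\gamma_{i+1}$ inside $K_{\gamma_1}$ is an atom of $\tilde{\mathfrak{t}}_{i+1}$ and so lies in an atom of $\tilde{\mathfrak{t}}_i$; since it meets $K_{\gamma_1}$ and $K_{\gamma_1}\cap(\hat{Z}\setminus K_{\gamma_1})=\emptyset$, that larger atom lies in $K_{\gamma_1}$ and is therefore an atom of $\gamma_i$. Combined with the shared infinite atom this gives $\gamma_i\preccurlyeq\gamma_{i+1}$.

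\emph{Second bullet, reduction.} The family $\{(\gamma_i)_{-j}^{j-1}\}_{i,j}$ is directed by refinement (increasing $i$, by the first bullet, and increasing $j$ both refine it), so every element of the algebra it generates is a union of atoms of a single member; consequently, once this algebra contains a base of $\hat{Z}$, the atoms themselves form a base, because the atom through any point $z$ lies inside any algebra element containing $z$. Now the algebra generated by $\bigcup_i\alpha_i$ is the Boolean algebra generated by $\{D_k\}$, which is a base of $\hat{Z}$: the $D_k$ form a base on $Z$, and by compactness the sets $\hat{Z}\setminus\bigcup_{\text{finite}}D_k$ form a neighbourhood base at $\omega_Z$. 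Thus it suffices to show the algebra generated by $\{(\gamma_i)_{-j}^{j-1}\}_{i,j}$ contains each $\alpha_i$, and for this I would prove $(\gamma_i)_{-j}^{j-1}\succ\tilde{\mathfrak{t}}_i\succ\alpha_i$ for $j$ exceeding the (finite) maximal column height of $\tilde{\mathfrak{t}}_i$. The right-hand refinement is immediate from $\tilde{\mathfrak{t}}_i=\phi_{\alpha_i}^{-1}\P_1(\alpha_i)$ together with Lemma~\ref{name_const_tall_tower}\eqref{constant_names}: a level of $\tilde{\mathfrak{t}}_i$ prescribes the return word and the position within it, hence the $\alpha_i$-letter at coordinate $0$.

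\emph{The crux.} The delicate step is $(\gamma_i)_{-j}^{j-1}\succ\tilde{\mathfrak{t}}_i$. Levels of $\tilde{\mathfrak{t}}_i$ lying in $K_{\gamma_1}$ are by construction atoms of $\gamma_i$, but a principal column of $\tilde{\mathfrak{t}}_i$ may protrude into the infinite atom $\hat{Z}\setminus K_{\gamma_1}$, where $\gamma_i$ records nothing. I would resolve this using that each principal column of $\tilde{\mathfrak{t}}_i$ meets $D_1$ and that $D_1\subset K_{\gamma_1}$ (indeed $D_1$ is disjoint from the infinite level of $\tilde{\mathfrak{t}}_1$): the $D_1$-meeting level of such a column is a $\gamma_i$-atom, and every other level of that column is a bounded shift of it, hence an atom of some $\hat{U}^{-l}\gamma_i$ with $|l|$ below the column height, and therefore a union of atoms of $(\gamma_i)_{-j}^{j-1}$ once $j$ is large. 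Since $\tilde{\mathfrak{t}}_i$ has finitely many levels a single $j$ works for all of them, and the infinite level of $\tilde{\mathfrak{t}}_i$ is then their complement, again a union of atoms. This yields the chain, so the generated algebra contains every $\alpha_i$ and hence a base. I expect this recovery, via shifts back to $D_1$, of the levels hidden in $\hat{Z}\setminus K_{\gamma_1}$ to be the only genuinely nontrivial point; the rest is bookkeeping with refinements.
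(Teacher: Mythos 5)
Your proposal is correct and follows essentially the same route as the paper: the paper's own proof consists of exactly the two observations you elaborate, namely that the first bullet is immediate from the refining sequence of K-R partitions, and that the second bullet rests on each principal column of $\tilde{\mathfrak{t}}_i$ meeting $D_1\subset K_{\gamma_1}$, which lets you recover the levels hidden in $\hat{Z}\setminus K_{\gamma_1}$ by bounded shifts. Your write-up is simply a detailed expansion of the paper's two-sentence argument, correctly identifying the same crux.
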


\begin{proof}
The first property is clear. The second property follows from a fact that 
each principal column of $\tilde{\mathfrak{t}}_i$ has a nonempty intersection with $K_{\gamma_1}$. 
\end{proof}

\section{Proof of Theorem~\ref{ICMT}}\label{the_proof}

Let $\mathbf{Y}=(Y,\B,\mu,T)$ be as in Theorem~\ref{ICMT}.
\begin{definition}
Let $\alpha$ and $\beta=\Set{B_1,B_2,\dots,B_n}$ be partitions of $Y$ with $\# \alpha = \# \beta=n$. 
The {\em $\alpha$ $(2k-1)$-block empirical distribution over a 
section} with $\alpha$-name $w$ is said to be {\em within $\epsilon$ of the 
$\beta_{-k+1}^{k-1}$ distribution} \cite[Definition~4.3]{Y} if for every word 
$v \in \L_{2k-1}(\alpha)^\prime$, 
\[
\left\vert \frac{\# \Set{1 \le j < \vert w \vert |w_{(j-k,j+k)}=v}}%
{\# \Set{1 \le j < \vert w \vert |w_j \ne 1}} - 
\frac{\mu\left(\bigcap_{i=-k+1}^{k-1}T^{-i}B_{v_{i+k}}\right)}
{\mu(K_\beta)} \right\vert < \epsilon,
\]
where $w_{(j-k,j+k)} = \begin{cases} w_{[1,j+k)} & \textrm{ if } j-k < 0; \\
w_{(j-k,|w|]} & \textrm{ if } j+k > n+1 \end{cases}$. 

We say that the partition $\alpha$ has {\em $(H,k,\epsilon)$-uniformity} if the 
$\alpha$ $(2k-1)$-block distribution over every section having at least $H$ points in $K_\alpha$ is 
within $\epsilon$ of the $\alpha_{-k+1}^{k-1}$ distribution. 
\end{definition}

Let $\mathbf{Z}=(Z,U)$ and $\pi$ be as in Theorem~\ref{ICMT}, so that there exists  a unique, up to 
scaling, infinite, $U$-invariant Radon measure $\nu$ satisfying that $\mu \circ \pi^{-1} = \nu$. 
Let $\mathcal{C}$ be the completion of the Borel $\sigma$-algebra with respect to $\nu$. Let 
$\hat{\mathbf{Z}}=(\hat{Z},\hat{U})$ be as posterior to Lemma~\ref{suff_tall}. Extend the measure 
$\nu$ naturally to a unique, non-atomic measure $\hat{\nu}$ onto the Borel $\sigma$-algebra of 
$\hat{Z}$. Complete the Borel $\sigma$-algebra with respect to $\hat{\nu}$, which is denoted by 
$\hat{\mathcal{C}}$. The map $\pi: Y \to Z$ is naturally regarded as a factor map from $\mathbf{Y}$ to 
$(\hat{Z},\hat{\mathcal{C}},\hat{\nu},\hat{U})$. 

Let $\Set{\gamma_i}_{i \in \N}$ be a refining sequence of partitions of $\hat{Z}$ as in 
Lemma~\ref{seq_gen_top}. In virtue of Corollary~\ref{factor_of_unq_LCCM}, for each $i \in \N$, the 
subshift $\hat{\mathbf{X}_{\gamma_i}}$ is a bi-ergodic, almost minimal Cantor system. 
For each $i \in \N$, set $\beta_i=\pi^{-1}\gamma_i$. 
It follows that $\Set{\beta_i}_{i \in \N}$ is a sequence of partitions of $Y$ satisfying that 
$\beta_1 \preccurlyeq \beta_2 \preccurlyeq \dots$\ Let $K$ denote their common finite support, which 
equals $\pi^{-1}\left(K_{\gamma_i}\right)$ for all $i \in \N$. Since the minimality of $U$ implies 
that any 
nonempty open subset of $Z$ has positive measure with respect to $\nu$, it follows from 
\eqref{meas_fact_subshift} and \eqref{top_fact_subshift} that $\hat{X_{\beta_i}}=\hat{X_{\gamma_i}}$ 
for each $i \in \N$. This together with Lemma~\ref{str_unf_iff_str_erg} implies that each $\beta_i$ 
is strictly uniform. Also, observe that $\phi_{\gamma_i} \circ \pi = \phi_{\beta_i}$ for each 
$i \in \N$. Choose a countable base $\mathcal{E} =\Set{E_i}_{i \in \N} \subset \B_0$ for the measure 
space $(Y,\B,\mu)$ so that each member appears in $\mathcal{E}$ infinitely often. Setting 
$K_0 = K \cap E_1$ and 
\[
K_i = (K \cap T^{-i}E_1) \setminus \bigcup_{j=1}^{i-1}T^{-j}E_1 \textrm{ for } i \in \N,
\]
we have a disjoint union $\bigcup_{i=0}^\infty T^iK_i=E_1$, because $Y=\bigcup_{j=1}^\infty T^{-j}E_1$ 
by the ergodicity. This allows us to find a partition $\tau_1$ with finite support $K$ implementing an 
approximation: 
\begin{equation}\label{approx_of_E1_by_tau1}
E_1 \overset{1/2^3}{\in} (\tau_1)^T
\end{equation}
by means of a finite union of mutually disjoint translates of mutually disjoint atoms; recall 
\eqref{alpha^T}. Fix $n_1 \in \N$ and $\delta_1 > 0$ so that 
\begin{align}
\frac{1}{2^{n_1}} & < \frac{1}{3}\min 
\Set{\frac{\mu(A)}{\mu(K)}|A \in (\tau_1 \vee \beta_1) \cap \B_0}; \label{Step1.1} \\
\delta_1 & < \min \Set{ \frac{1}{4 \cdot 2^{n_1}}, \ \frac{\mu(K)}{4 \cdot 2^{n_1}}, \ \frac{1}{2^3}}. 
\label{Step1.2} 
\end{align}
We regard $\mathfrak{A}_{\tau_1 \vee \beta_1}$ as a subset of a product set 
$\mathfrak{A}_{\tau_1} \times \mathfrak{A}_{\beta_1}$, so that in particular an infinite atom of 
$\tau_1 \vee \beta_1$ has a subscript $(1,1)$. 
\begin{remark}
Since every tower which we will encounter in this proof is $K$-standard, we will suppress the term 
`$K$-standard' for the sake of simplicity. Also, every partition will have the set $K$ as its finite 
support. 
\end{remark}
We appeal to a series of lemmas. Recall that given a partition $\alpha$ of $Y$, 
$\mathcal{F}_\alpha$ denotes the algebra generated by $\bigcup_{k=1}^\infty \alpha_{-k}^{k-1}$. 
\begin{lemma}[Step~1]\label{lem_Step_1}
There exist partition $\alpha_{1,1}$, $\mathcal{F}_{\beta_1}$-measurable 
tower $\mathfrak{t}_1$ and $N_1 \in \N$ such that 
\begin{itemize}
\item
$\alpha_{1,1}  \succcurlyeq \beta_1;$
\item
$E_1 \overset{1/2^2}{\in} (\alpha_{1,1})^T;$
\item
if a partition $\alpha$ of $Y$ is such that the $\alpha$-name of any fiber of $\mathfrak{t}_1$ coincides 
with the $\alpha_{1,1}$-name of some fiber of $\mathfrak{t}_1$, then the partition $\alpha$ has the 
$(N_1,1,1/2^{n_1})$-uniformity. 
\end{itemize}
\end{lemma}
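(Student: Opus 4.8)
\noindent\emph{Proof proposal.} The plan is to set $P := \tau_1 \vee \beta_1$, which has finite support $K$ and refines $\beta_1$, and to build $\alpha_{1,1}$ from $P$ by repainting a small set while keeping the alphabet $\mathfrak{A}_P$ and altering only the $\tau_1$-coordinate; such an $\alpha_{1,1}$ will satisfy $\alpha_{1,1}\succcurlyeq\beta_1$ automatically, and its distance $d(P,\alpha_{1,1})$ will be the device for passing from $(P)^T$ to $(\alpha_{1,1})^T$. Since $E_1\overset{1/2^3}{\in}(\tau_1)^T$ and each finite atom of $\tau_1$ is a union of finite atoms of $P$, we have $(\tau_1)^T\subseteq(P)^T$, whence $E_1\overset{1/2^3}{\in}(P)^T$. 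If $F=\bigcup_i T^{k_i}A_i\in(P)^T$ realizes $\mu(E_1\triangle F)\le 1/2^3$, then replacing each $P$-atom $A_i$ by the like-indexed $\alpha_{1,1}$-atom $A_i'$ gives $F'\in(\alpha_{1,1})^T$ with $\mu(F\triangle F')\le d(P,\alpha_{1,1})$; so $d(P,\alpha_{1,1})\le 1/2^3$ would force $E_1\overset{1/2^2}{\in}(\alpha_{1,1})^T$.

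I would first choose a small $\epsilon_0>0$ with $\#(P\cap\B_0)\,\epsilon_0\le 1/2^3$ and $\epsilon_0<\delta_1$, together with an integer $M_1$ to be enlarged later. Applying Lemma~\ref{suff_tall} to each finite atom $A\in P\cap\B_0$ with tolerance $\epsilon_0$ and threshold $M_1$ yields an $N\in\N$; I then pick $n$ with $h(\tilde{\mathfrak{t}}_n(\gamma_1))>N$ and $h_{K_{\gamma_1}}(\tilde{\mathfrak{t}}_n(\gamma_1))\ge M_1$, which is possible since these heights tend to $\infty$ by Lemma~\ref{name_const_tall_tower}, and set $\mathfrak{t}_1:=\pi^{-1}\tilde{\mathfrak{t}}_n(\gamma_1)$. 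Then $\mathfrak{t}_1$ is a $K$-standard, $\mathcal{F}_{\beta_1}$-measurable tower with finitely many columns, on each of which the $\beta_1$-name is a fixed word (Lemma~\ref{name_const_tall_tower}). Intersecting the conclusions of Lemma~\ref{suff_tall} over the finitely many $A$, the \emph{good} fibers---those carrying at least $M_1$ points of $K$ with every finite-atom frequency within $\epsilon_0$ of $\mu(A)/\mu(K)$---cover at least $\mu(K)-\#(P\cap\B_0)\,\epsilon_0$ of $K$, so the \emph{bad} fibers have total measure at most $1/2^3$.

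Next I would repaint the bad fibers. Because $\beta_1$ is strictly, hence plainly, uniform, every fiber of $\mathfrak{t}_1$ with at least $M_1$ points of $K$ already has its $\beta_1$-$1$-block distribution within $\epsilon_0$ of the truth; thus each column carries the correct $\beta_1$-marginal, bounded below by \eqref{Step1.1}. Consequently, on each bad fiber I may reassign the $\tau_1$-labels at the $K$-positions---grouped by their fixed $\beta_1$-label, following a rule depending only on the column, which keeps the repainting measurable and $\beta_1$-respecting---so that the conditional $\tau_1$-frequencies, and hence the joint $P$-name frequencies, fall within $\epsilon_0$ of the targets $\mu(A)/\mu(K)$; this is feasible since $M_1$ is large and, by \eqref{Step1.1}, each $\beta_1$-label occurs with frequency bounded away from $0$. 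Let $\alpha_{1,1}$ be the result. Only bad fibers change, so $d(P,\alpha_{1,1})\le 1/2^3$, which by the first paragraph gives $E_1\overset{1/2^2}{\in}(\alpha_{1,1})^T$, and $\alpha_{1,1}\succcurlyeq\beta_1$ follows from the $\tau_1$-only modification.

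Finally I would verify the uniformity clause and fix $N_1$. Every fiber of $\mathfrak{t}_1$ now carries a good $\alpha_{1,1}$-name, so if a partition $\alpha$ has all of its $\mathfrak{t}_1$-fiber names among these, then each complete fiber met by a section of $\alpha$ has $1$-block distribution within $\epsilon_0$ of the truth. Given a section with at least $N_1$ points of $K$, I cut it along $\mathfrak{t}_1$ into two incomplete end-fibers and a run of complete fibers; the infinite-level excursions contribute only the letter $1$ and are invisible to the normalized distribution, while the two end-fibers contribute at most $2H_K(\mathfrak{t}_1)$ points of $K$. Averaging over the complete fibers and absorbing the ends, the section's $1$-block distribution lies within $\epsilon_0+2H_K(\mathfrak{t}_1)/N_1$ of the truth; choosing $N_1$ so large that this is below $1/2^{n_1}$, using $\epsilon_0<\delta_1<1/(4\cdot 2^{n_1})$ from \eqref{Step1.2}, yields the $(N_1,1,1/2^{n_1})$-uniformity. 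The main obstacle is the third paragraph: arranging that \emph{every} fiber---not merely a large-measure set---carries a good name, while keeping $\alpha_{1,1}$ both refining $\beta_1$ and $d$-close to $P$. This is exactly where the uniformity of $\beta_1$ is indispensable, as it supplies the correct $\beta_1$-marginal on all tall fibers and thereby makes the from-scratch repainting of the bad columns both possible and compatible with the prescribed atom measures.
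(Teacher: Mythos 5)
Your proposal follows the paper's skeleton for most of its length: the same application of Lemma~\ref{suff_tall} to the finite atoms of $\tau_1\vee\beta_1$, the same choice of an $\mathcal{F}_{\beta_1}$-measurable tower of the form $\pi^{-1}\tilde{\mathfrak{t}}_n(\gamma_1)$ via Lemma~\ref{name_const_tall_tower}, the same good/bad fiber dichotomy, the same bookkeeping from $d(\tau_1\vee\beta_1,\alpha_{1,1})\le 1/2^3$ to $E_1\overset{1/2^2}{\in}(\alpha_{1,1})^T$, and essentially the same decomposition of a long section into complete fibers plus two partial end-fibers for the final uniformity estimate. Where you diverge is the treatment of bad fibers, and that is where the gap lies. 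The paper \emph{copies} the $(\tau_1\vee\beta_1)$-name of a good fiber onto the bad fibers of the same column, \emph{provided a good fiber exists in that column}, and collects the columns containing no good fiber into a residual set $R_1$; the whole point of $R_1$ being $\mathcal{F}_{\beta_1}$-measurable, of measure at most $\delta_1$, and hence \emph{uniform relative to} $K$, is that these unfixable columns can only be controlled statistically along long sections, not repaired fiber by fiber. You instead repaint \emph{every} bad fiber from scratch, which requires that every sufficiently tall fiber already carries a $\beta_1$-marginal within $\epsilon_0$ of the true one (the $\beta_1$-coordinate is frozen by the requirement $\alpha_{1,1}\succcurlyeq\beta_1$, so a fiber whose $\beta_1$-name entirely omits some finite $\beta_1$-atom, or badly misrepresents its frequency, cannot be repainted into a good joint name by changing $\tau_1$-labels alone).

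That claim --- ``every fiber of $\mathfrak{t}_1$ with at least $M_1$ points of $K$ has its $\beta_1$-distribution within $\epsilon_0$ of the truth'' --- does not follow from the uniformity of $\beta_1$ as you assert. Uniformity is defined through symmetric Birkhoff windows $T_n$ centered at points of $K$ with many $K$-visits; a fiber of a tower is a one-sided orbit segment delimited by the tower structure, and the only bridge the paper provides from uniformity to fiber statistics is Lemma~\ref{suff_tall}, which yields good statistics only for a family of fibers covering all but $\epsilon$ of $K$ in measure. Since the $\beta_1$-name is constant on each column, what you actually get is that the columns with bad $\beta_1$-statistics have small total measure --- not that there are none. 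A column of very small measure whose name is atypical is entirely consistent with uniformity, and on such a column your from-scratch repainting is impossible. This is precisely the situation the paper's $R_1$ device is built to absorb (via the $\epsilon_m$ computation and the uniformity of $R_1$ relative to $K$), so you need either to restore that device or to prove a genuinely stronger statement --- uniform convergence of one-sided, $K$-normalized averages over \emph{all} return words of $\tilde{\mathfrak{t}}_n(\gamma_1)$ for $n$ large, which would have to be extracted from the unique ergodicity of $\hat{\mathbf{Z}}$ and is nowhere established in the paper.
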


\begin{proof}
Applying Lemma~\ref{suff_tall} to the finite atoms of a partition $\tau_1 \vee \beta_1$, we obtain 
$H_1 \in \N$ such that if $\mathfrak{t}$ is a tower with $h(\mathfrak{t}) \ge H_1$ then the {\em good} 
fibers of $\mathfrak{t}$, i.e.\ the $\tau_1 \vee \beta_1$ $1$-block empirical distributions over the 
fibers are within $\delta_1$ of the $\tau_1 \vee \beta_1$ distribution, covers at least 
$\mu(K)-\delta_1$ of $K$ in measure. Lemma~\ref{name_const_tall_tower} allows us to find an 
$\mathcal{F}_{\beta_1}$-measurable tower $\mathfrak{t}_1^\prime$ with 
$h(\mathfrak{t}_1^\prime) \ge H_1$ such that the $\beta_1$-names of fibers are constant on each column 
of $\mathfrak{t}_1^\prime$. The tower $\mathfrak{t}_1^\prime$ has the form 
$\pi^{-1}\tilde{\mathfrak{t}}_{n^\prime}(\gamma_1)$. Within each principal column of 
$\mathfrak{t}_1^\prime$, copy the $(\tau_1 \vee \beta_1)$-name of a good fiber into bad fibers, 
provided that good fibers exist in the column. This yields a new partition $\alpha_{1,1}$ with 
$\mathfrak{A}_{\alpha_{1,1}} \subset \mathfrak{A}_{\tau_1 \vee \beta_1}$. In virtue of the 
above-mentioned property of the tower $\mathfrak{t}_1^\prime$, the copying procedure 
never changes $\beta_1$-names although it may change $\tau_1$-names. Since it follows from 
\eqref{Step1.1} and \eqref{Step1.2} that 
\[
\delta_1< \frac{\mu(K)}{4 \cdot 2^{n_1}} < \frac{1}{12}\min \Set{ \mu(A)| A \in (\tau_1 \vee \beta_1) 
\cap \B_0},
\]
we obtain that $\mathfrak{A}_{\alpha_{1,1}}=\mathfrak{A}_{\tau_1 \vee \beta_1}$. Since a given point 
has the $\tau_1$-name $1$ if and only if it has the $\beta_1$-name $1$, we see that 
$K_{\alpha_{1,1}}=K_{\tau_1 \vee \beta_1}=K$. In order to see that 
$\alpha_{1,1} \succcurlyeq \beta_1$, again in virtue of the above-mentioned property of 
$\mathfrak{t}_1^\prime$, it is sufficient to observe that any enlargement or 
reduction of each finite atom of $\tau_1 \vee \beta_1$ caused by the copying procedure is carried out 
within a unique atom of $\beta_1$. Moreover, for the same reason, the refinement 
$\alpha_{1,1} \succcurlyeq \beta_1$ is implemented so that 
$f_{\beta_1,\alpha_{1,1}}= f_{\beta_1,\tau_1 \vee \beta_1}$; recall \eqref{map_f}. Since 
$d(\alpha_{1,1},\tau_1 \vee \beta_1) \le \delta_1 < 1/2^3$, 
it follows from \eqref{approx_of_E1_by_tau1} that 
\[
E_1 \overset{1/2^2}{\in} (\alpha_{1,1})^T.
\]

Let $R_1$ denote the intersection of the set $K$ with the union of those principal columns of 
$\mathfrak{t}_1^\prime$ which do not include any good fibers. As a consequence of the copying 
procedure, we know that if a fiber on a principal column of $\mathfrak{t}_1^\prime$ is disjoint 
from $R_1$ then the fiber has $\alpha_{1,1}$ $1$-block empirical distribution within $\delta_1$ of 
the $\tau_1 \vee \beta_1$ distribution. Let us verify that this leads to the existence of $M_1 \in \N$ 
such that for any point $y \in K$, 
\begin{equation}\label{M1}
A \in \alpha_{1,1} \cap \B_0 \textrm{ and } T_n\mathbbm{1}_{K \setminus R_1}(y) \ge M_1 \Rightarrow 
\left\vert \frac{T_n\mathbbm{1}_{A \setminus R_1}(y)}{T_n\mathbbm{1}_{K \setminus R_1}(y)} - 
\frac{\mu(B)}{\mu(K)} \right\vert < \delta_1,
\end{equation}
where $B$ is a finite atom of $\tau_1 \vee \beta_1$ which shares a subscript with the finite atom $A$ 
of $\alpha_{1,1}$. Let $\mathscr{F}$ denote the family of good fibers of $\mathfrak{t}_1^\prime$. 
For each integer $m$ with $m \ge 3$, put
\[
\epsilon_m=\frac{4}{m-2} \cdot 
\frac{\max_{F \in \mathscr{F}}\# (F \cap K)}{\min_{F \in \mathscr{F}}\# (F \cap K)}.
\]
Fix an integer $m_0$ with $m_0 \ge 3$ so that 
\[
\epsilon_{m_0} < \delta_1 - \max \Set{ 
\left\vert \frac{\# (F \cap A)}{\# (F \cap K)}-\frac{\mu(B)}{\mu(K)}\right\vert | 
\begin{split}
& A \in \alpha_{1,1} \cap \B_0, B \in (\tau_1 \vee \beta_1) \cap B_0, F \in \mathscr{F}, \\
& \textrm{$B$ shares a subscript with $A$.}
\end{split}}.
\]
Set
\[
M_1= m_0 \max_{F \in \mathfrak{F}}\#(F \cap K),
\]
which is independent of the choice of the finite atom $A$. 
If $T_n\mathbbm{1}_{K \setminus R_1}(y) \ge M_1$ then there are integer $m_1$ with $m_1 \ge m_0$, 
nonempty subsets $C_{i_1}, C_{i_{m_1}}$ of good fibers of $\mathfrak{t}_1^\prime$, whose intersections 
with $K$ are nonempty, and fibers $C_{i_2},\dots,C_{i_{m_1-1}} \in \mathscr{F}$ for which 
$T_n\mathbbm{1}_{K \setminus R_1}(y) =\sum_{j=1}^{m_1} \# (C_{i_j} \cap K)$. Assume that 
$T_n\mathbbm{1}_{K \setminus R_1}(y) \ge M_1$ and $A \in \alpha_{1,1} \cap \B_0$ is arbitrary. Since
\begin{align*}
\frac{T_n\mathbbm{1}_{A \setminus R_1}(y)}{T_n\mathbbm{1}_{K \setminus R_1}(y)} &= 
\frac{\# (C_{i_1} \cap A) + \sum_{j=2}^{m_1-1} \# (C_{i_j} \cap A)+ \# (C_{i_{m_1}} \cap A)}%
{\# (C_{i_1} \cap K) + \sum_{j=2}^{m_1-1} \# (C_{i_j} \cap K)+ \# (C_{i_{m_1}} \cap K)} \\
&=\frac{\dfrac{\# (C_{i_1} \cap A)}{\sum_{j=2}^{m_1-1} \# (C_{i_j} \cap K)} + 
\dfrac{\sum_{j=2}^{m_1-1} \# (C_{i_j} \cap A)}{\sum_{j=2}^{m_1-1} \# (C_{i_j} \cap K)}
+ \dfrac{\# (C_{i_{m_1}} \cap A)}{\sum_{j=2}^{m_1-1} \# (C_{i_j} \cap K)}}
{\dfrac{\# (C_{i_1} \cap K)}{\sum_{j=2}^{m_1-1} \# (C_{i_j} \cap K)}+1 + 
\dfrac{\# (C_{i_{m_1}} \cap K)}{\sum_{j=2}^{m_1-1} \# (C_{i_j} \cap K)}}
\end{align*}
and since $\dfrac{\sum_{j=2}^{m_1-1} \# (C_{i_j} \cap A)}{\sum_{j=2}^{m_1-1} \# (C_{i_j} \cap K)} 
\le 1$, we obtain that
\begin{align*}
& \left\vert \frac{T_n\mathbbm{1}_{A \setminus R_1}(y)}{T_n\mathbbm{1}_{K \setminus R_1}(y)} - 
\frac{\sum_{j=2}^{m_1-1} \# (C_{i_j} \cap A)}{\sum_{j=2}^{m_1-1} \# (C_{i_j} \cap K)} \right\vert \\ 
& \le \left\vert \dfrac{\# (C_{i_1} \cap A)}{\sum_{j=2}^{m_1-1} \# (C_{i_j} \cap K)} - 
\dfrac{\# (C_{i_1} \cap K)}{\sum_{j=2}^{m_1-1} \# (C_{i_j} \cap K)} \cdot 
\dfrac{\sum_{j=2}^{m_1-1} \# (C_{i_j} \cap A)}{\sum_{j=2}^{m_1-1} \# (C_{i_j} \cap K)} \right. \\
& \ \quad \left. + \dfrac{\# (C_{i_{m_1}} \cap A)}{\sum_{j=2}^{m_1-1} \# (C_{i_j} \cap K)} 
- \dfrac{\# (C_{i_{m_1}} \cap K)}{\sum_{j=2}^{m_1-1} \# (C_{i_j} \cap K)} \cdot 
\dfrac{\sum_{j=2}^{m_1-1} \# (C_{i_j} \cap A)}{\sum_{j=2}^{m_1-1} \# (C_{i_j} \cap K)} 
\right\vert \\
& \le \epsilon_{m_1}.
\end{align*}
In view of an inequality:
\[
\min_{1 \le i \le k} \frac{a_i}{b_i} \le \frac{\sum_{i=1}^k a_i}{\sum_{i=1}^k b_i} \le 
\max_{1 \le i \le k} \frac{a_i}{b_i} \ \ (a_i,b_i > 0, k \in \N), 
\]
this allows us to obtain that 
\begin{align*}
\left\vert \frac{T_n\mathbbm{1}_{A \setminus R_1}(y)}{T_n\mathbbm{1}_{K \setminus R_1}(y)} - 
\frac{\mu(B)}{\mu(K)} \right\vert & \le \max_{F \in \mathscr{F}}\left\vert \frac{\# (F \cap A)}
{\# (F \cap K)} - \frac{\mu(B)}{\mu(K)}\right \vert + \epsilon_{m_1} \\ 
& \le  \max_{\substack{F \in \mathscr{F}, \ A \in \alpha_{1,1} \cap \B_0, \\ B \in (\tau_1 \vee \beta_1) 
\cap B_0, \\ \textrm{$B$ shares a subscript with $A$.}}}
\left\vert \frac{\# (F \cap A)}{\# (F \cap K)} - \frac{\mu(B)}{\mu(K)}\right \vert + \epsilon_{m_0} \\
& < \delta_1,
\end{align*}
which shows \eqref{M1}. Observe that in order to obtain \eqref{M1} we are not required to consider 
which good fiber each fiber $C_{i_j}$ is. This is one of facts which guarantee the last property of 
the lemma. 

On the other hand, since $R_1$ is $\mathcal{F}_{\beta_1}$-measurable, and hence, uniform 
relative to $K$, and since 
\[
\mu(R_1) \le \delta_1 < \frac{\mu(K)}{4 \cdot 2^{n_1}},
\]
there exists $N_1^\prime \in \N$ with 
\[
N_1^\prime \ge \left(1-\frac{1}{4 \cdot 2^{n_1}}\right)^{-1}M_1
\]
such that for any point $y \in K$,
\[
T_n\mathbbm{1}_K(y) \ge N_1^\prime \Rightarrow \frac{T_n\mathbbm{1}_{R_1}(y)}{T_n\mathbbm{1}_K(y)} 
< \frac{1}{4 \cdot 2^{n_1}},
\]
which implies further that 
\[
T_n\mathbbm{1}_{K \setminus R_1}(y) > \left(1-\frac{1}{4 \cdot 2^{n_1}} \right) \cdot 
T_n\mathbbm{1}_K(y) \ge M_1.
\]
It follows from the above argument that for any point $y \in K$, if 
$T_n\mathbbm{1}_K(y) \ge N_1^\prime$ then for every finite atom $A$ of $\alpha_{1,1}$,
\begin{align*}
\left| \frac{T_n\mathbbm{1}_A(y)}{T_n\mathbbm{1}_K(y)} - \frac{\mu(A)}{\mu(K)}\right| 
& \le \left| \frac{T_n\mathbbm{1}_{A \setminus R_1}(y)}{T_n\mathbbm{1}_K(y)} - 
\frac{\mu(B)}{\mu(K)}\right| + \frac{T_n\mathbbm{1}_{R_1}(y)}{T_n\mathbbm{1}_K(y)} 
+ \left| \frac{\mu(B)}{\mu(K)} - \frac{\mu(A)}{\mu(K)}\right| \\
&  < \left| \frac{T_n\mathbbm{1}_{A \setminus R_1}(y)}{T_n\mathbbm{1}_{K \setminus R_1}(y)}
\left(1-\frac{T_n\mathbbm{1}_{R_1}(y)}{T_n\mathbbm{1}_K(y)}\right) - 
\frac{\mu(B)}{\mu(K)}\right| + \frac{1}{2 \cdot 2^{n_1}} \\
& \le \left| \frac{T_n\mathbbm{1}_{A \setminus R_1}(y)}{T_n\mathbbm{1}_{K \setminus R_1}(y)}
 - \frac{\mu(B)}{\mu(K)}\right| + \frac{T_n\mathbbm{1}_{R_1}(y)}{T_n\mathbbm{1}_K(y)} + 
\frac{1}{2 \cdot 2^{n_1}} \\
& < \delta_1 + \frac{3}{4 \cdot 2^{n_1}} \\ 
& < \frac{1}{2^{n_1}}, 
\end{align*}
where $B$ is a finite atom of $\tau_1 \vee \beta_1$ which shares a subscript with the atom $A$ of 
$\alpha_{1,1}$. By using Lemma~\ref{name_const_tall_tower}, take an $\mathcal{F}_{\beta_1}$-measurable 
refinement $\mathfrak{t}_1$ of $\mathfrak{t}_1^\prime$ with $h_K(\mathfrak{t}_1) \ge N_1^\prime$. The 
tower $\mathfrak{t}_1$ has the form $\pi^{-1}\tilde{\mathfrak{t}}_n(\gamma_1)$. 
By argument similar to deducing \eqref{M1}, we can find an integer $N_1$ with $N_1 \ge N_1^\prime$ 
for which the last property of the lemma is valid. 
\end{proof}

In the same manner as \eqref{approx_of_E1_by_tau1}, find a partition $\tau_2$ with finite support 
$K$ implementing an approximation:
\begin{equation}\label{approx_of_E2_by_tau2}
E_2 \overset{1/2^4}{\in} (\tau_2)^T.
\end{equation}
The alphabet $\mathfrak{A}_{\alpha_{1,1} \vee \tau_2 \vee \beta_2}$ is regarded as a subset of a 
product set 
$\mathfrak{A}_{\alpha_{1,1}} \times \mathfrak{A}_{\tau_2} \times \mathfrak{A}_{\beta_2}$. 
Put 
\[
r_2=\#(\alpha_{1,1} \vee \tau_2 \vee \beta_2).
\]
Fix integer $n_2$ with $n_2 > n_1$ and real number $\delta_2 > 0$ so that 
\begin{align*}
\frac{1}{2^{n_2}} & < \frac{1}{3}\min \Set{\frac{\mu(A)}{\mu(K)}| 
A \in (\alpha_{1,1} \vee \tau_2 \vee \beta_2)_{-1}^1 \cap \B_0}; \\
\delta_2 & < \min \Set{ \frac{1}{3 \cdot 4 \cdot 2^{n_2} {r_2}^7}, \ %
\frac{\mu(K)}{3 \cdot 4 \cdot 2^{n_2} {r_2}^7}, \ \frac{1}{2^4}}.
\end{align*}
It is an immediate consequence that $n_2 \ge 2$. 

\begin{lemma}[Step~2]\label{scndstplmm}
There exist partitions $\alpha_{2,2}$ and $\alpha_{2,1}$, $\mathcal{F}_{\beta_2}$-measurable refinement 
$\mathfrak{t}_2$ of $\mathfrak{t}_1$ and integer $N_2$ with $N_2 > N_1$ such that 
\begin{itemize}
\item
$\alpha_{2,2} \succcurlyeq \beta_2$, $\alpha_{2,1} \succcurlyeq \beta_1$ and 
$\alpha_{2,2} \succcurlyeq \alpha_{2,1};$
\item
$E_1 \overset{1/2^2+1/2^3}{\in} (\alpha_{2,1})^T$ and $E_2 \overset{1/2^3}{\in} (\alpha_{2,2})^T;$
\item
$\# \alpha_{2,2}=r_2;$
\item
$d(\alpha_{2,1},\alpha_{1,1}) \le \delta_2 < 2^{-n_2};$
\item
the $\alpha_{2,1}$-name of any fiber of $\mathfrak{t}_1$ coincides with the 
$\alpha_{1,1}$-name of some fiber of $\mathfrak{t}_1;$
\item
if a partition $\alpha$ is such that the $\alpha$-name of any fiber of $\mathfrak{t}_2$ coincides 
with the $\alpha_{2,2}$-name of some fiber of $\mathfrak{t}_2$ then $\alpha$ has the 
$(N_2,2,1/(2^{n_2}{r_2}^4))$-uniformity.
\end{itemize}
\end{lemma}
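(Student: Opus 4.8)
The plan is to re-run the proof of Step~1 (Lemma~\ref{lem_Step_1}) one scale finer, with the join $\tau_1 \vee \beta_1$ replaced by $\alpha_{1,1} \vee \tau_2 \vee \beta_2$ and with $1$-block ($k=1$) statistics replaced by $3$-block ($k=2$) statistics, while additionally threading the two-layer data $\alpha_{2,2} \succcurlyeq \alpha_{2,1}$ through the construction. Recall that $\mathfrak{t}_1 = \pi^{-1}\tilde{\mathfrak{t}}_n(\gamma_1)$ for some $n$. Since $\gamma_2 \succ \gamma_1$, the addendum of Lemma~\ref{name_const_tall_tower} together with the fact that $\{\tilde{\mathfrak{t}}_m(\gamma_1)\}_m$ is refining in $m$ lets me pick $m \ge n$ so large that $\mathfrak{t}_2^\prime := \pi^{-1}\tilde{\mathfrak{t}}_m(\gamma_2)$ is an $\mathcal{F}_{\beta_2}$-measurable refinement of $\mathfrak{t}_1$ whose columns carry constant $\beta_2$-names (a fortiori constant $\beta_1$-names) and with $h(\mathfrak{t}_2^\prime)$ as large as desired. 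Applying Lemma~\ref{suff_tall} to the finite atoms of the $3$-block refinement $(\alpha_{1,1} \vee \tau_2 \vee \beta_2)_{-1}^1$ then yields a height threshold past which the \emph{good} fibers of $\mathfrak{t}_2^\prime$ --- those whose $(\alpha_{1,1}\vee\tau_2\vee\beta_2)$ $3$-block empirical distribution is within $\delta_2$ of the $(\alpha_{1,1}\vee\tau_2\vee\beta_2)_{-1}^1$ distribution --- cover at least $\mu(K) - \delta_2$ of $K$.

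Second, within each principal column of $\mathfrak{t}_2^\prime$ that contains a good fiber I copy the $(\alpha_{1,1}\vee\tau_2\vee\beta_2)$-name of a good fiber into the remaining fibers, and call the resulting partition $\alpha_{2,2}$; its alphabet lies in $\mathfrak{A}_{\alpha_{1,1}\vee\tau_2\vee\beta_2}$. As the $\beta_2$-name is constant on each column, copying never disturbs $\beta_2$-names, so $\alpha_{2,2} \succcurlyeq \beta_2$; the measure bounds imposed on $n_2$ and $\delta_2$ (the analogues of \eqref{Step1.1}--\eqref{Step1.2}) force $\mathfrak{A}_{\alpha_{2,2}} = \mathfrak{A}_{\alpha_{1,1}\vee\tau_2\vee\beta_2}$, whence $\# \alpha_{2,2} = r_2$ and $d(\alpha_{2,2}, \alpha_{1,1}\vee\tau_2\vee\beta_2) \le \delta_2$. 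I then set $\alpha_{2,1}$ to be the coarsening of $\alpha_{2,2}$ obtained by projecting $\mathfrak{A}_{\alpha_{1,1}\vee\tau_2\vee\beta_2}$ onto its $\mathfrak{A}_{\alpha_{1,1}}$-coordinate, so that $\alpha_{2,2} \succcurlyeq \alpha_{2,1}$ and $\mathfrak{A}_{\alpha_{2,1}} = \mathfrak{A}_{\alpha_{1,1}}$. Since all fibers of a column share a single $\beta_2$-, hence $\beta_1$-name, the copied $\alpha_{1,1}$-coordinate is everywhere consistent with the ambient $\beta_1$-name, so $\alpha_{2,1} \succcurlyeq \beta_1$; and because $\mathfrak{t}_2^\prime$ refines $\mathfrak{t}_1$, every fiber of a column decomposes into one and the same pattern of consecutive $\mathfrak{t}_1$-fibers, so each copied $\alpha_{1,1}$-name restricts on every constituent $\mathfrak{t}_1$-fiber to a genuine $\alpha_{1,1}$-name of a $\mathfrak{t}_1$-fiber --- exactly the required coincidence. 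The $\alpha_{1,1}$-name is altered only inside bad fibers, of total measure at most $\delta_2$, giving $d(\alpha_{2,1}, \alpha_{1,1}) \le \delta_2 < 2^{-n_2}$.

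Third, the two approximation clauses are bookkeeping on $\alpha^T$. From $E_1 \overset{1/2^2}{\in} (\alpha_{1,1})^T$ and $d(\alpha_{2,1}, \alpha_{1,1}) \le \delta_2 < 2^{-3}$, replacing each finite atom of $\alpha_{1,1}$ in the approximating translate-union by the $\alpha_{2,1}$-atom of the same subscript perturbs the measure by at most $\delta_2$, giving $E_1 \overset{1/2^2 + 1/2^3}{\in} (\alpha_{2,1})^T$; similarly, from \eqref{approx_of_E2_by_tau2} and $d(\alpha_{2,2}, \alpha_{1,1}\vee\tau_2\vee\beta_2) \le \delta_2$, rewriting each $\tau_2$-atom as the matching union of $\alpha_{2,2}$-atoms costs at most $(r_2-1)\delta_2 < 2^{-4}$, giving $E_2 \overset{1/2^3}{\in} (\alpha_{2,2})^T$.

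Finally, I would derive the $(N_2,2,1/(2^{n_2}r_2^4))$-uniformity by repeating the estimate of \eqref{M1} and the lines following it at the $3$-block scale: good fibers carry correct $3$-block frequencies, an arbitrary section with many points in $K$ is a concatenation of truncated fibers, the elementary bound $\min_i a_i/b_i \le \sum_i a_i/\sum_i b_i \le \max_i a_i/b_i$ controls its interior, and the bad set $R_2$ (the union of principal columns of $\mathfrak{t}_2^\prime$ with no good fiber, of measure $\le \delta_2$ and $\mathcal{F}_{\beta_2}$-measurable, hence uniform relative to $K$) is absorbed by taking $N_2 > N_1$ large. I expect the main obstacle --- and the reason for the extra powers of $r_2$ in the choice $\delta_2 < 1/(3 \cdot 4 \cdot 2^{n_2} r_2^7)$ --- to be the $3$-block boundary bookkeeping: one must handle the at most $r_2^3$ distinct blocks $v \in \L_3(\alpha_{1,1}\vee\tau_2\vee\beta_2)^\prime$ simultaneously and discard the $O(1)$-per-junction blocks that straddle fiber boundaries, both when passing from point-counts to block-counts on a single fiber and when gluing fibers into a section. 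Interlocked with this is the genuinely new structural point of keeping all three demands on $\alpha_{2,1}$ --- $d$-closeness to $\alpha_{1,1}$, the $\mathfrak{t}_1$-fiber name coincidence, and $\alpha_{2,1} \succcurlyeq \beta_1$ --- simultaneously in force although the copying is performed one level up, at the scale of $\mathfrak{t}_2^\prime$; this is precisely what the constancy of $\beta_2$-names on columns and the alignment of $\mathfrak{t}_1$-subfibers inside $\mathfrak{t}_2^\prime$, both furnished by Lemma~\ref{name_const_tall_tower}, are there to guarantee. A closing refinement of $\mathfrak{t}_2^\prime$ to an $\mathcal{F}_{\beta_2}$-measurable tower $\mathfrak{t}_2$ with $h_K(\mathfrak{t}_2) \ge N_2$, exactly as at the end of Step~1, then supplies $\mathfrak{t}_2$ and the final $N_2$.
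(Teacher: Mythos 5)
Your proposal follows the paper's proof essentially step for step: the same choice of $\mathfrak{t}_2^\prime=\pi^{-1}\tilde{\mathfrak{t}}_{n^\prime}(\gamma_2)$ with constant $\beta_2$-names on columns, the same copying of good-fiber $(\alpha_{1,1}\vee\tau_2\vee\beta_2)$-names at the $3$-block scale, the same definition of $\alpha_{2,1}$ by projecting subscripts onto the $\mathfrak{A}_{\alpha_{1,1}}$-coordinate, the same treatment of the $\mathcal{F}_{\beta_2}$-measurable bad set $R_2$, and the same closing refinement to obtain $\mathfrak{t}_2$ and $N_2$. The points you flag as the main technical burdens (the $3$-block boundary bookkeeping behind the $r_2^7$ in the choice of $\delta_2$, and the role of column-constant $\beta_2$-names in reconciling the three demands on $\alpha_{2,1}$) are exactly the ones the paper's argument addresses, so the proposal is correct and matches the paper's route.
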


\begin{proof}
Since $\mathfrak{t}_1$ has the form $\pi^{-1}\tilde{t}_n(\gamma_1)$, Lemmas~\ref{suff_tall} and 
\ref{name_const_tall_tower} allow us to obtain an $\mathcal{F}_{\beta_2}$-measurable refinement 
$\mathfrak{t}_2^\prime$ of $\mathfrak{t}_1$ such that 
\begin{itemize}
\item
the $\beta_2$-names of fibers of $\mathfrak{t}_2^\prime$ are constant on each column;
\item
{\em good} fibers of $\mathfrak{t}_2^\prime$, i.e.\ the 
$\alpha_{1,1} \vee \tau_2 \vee \beta_2$ $3$-block empirical distributions on the fibers are within 
$\delta_2$ of the $(\alpha_{1,1} \vee \tau_2 \vee \beta_2)_{-1}^1$ distribution, covers at least 
$\mu(K)-\delta_2$ of $K$ in measure.
\end{itemize}
In fact, when applying Lemma~\ref{suff_tall}, we may have to consider translates $T(A)$ or 
$T^{-1}(A)$ instead of finite atoms $A \in (\alpha_{1,1} \vee \tau_2 \vee \beta_2)_{-1}^1$ themselves. 
The tower $\mathfrak{t}_2^\prime$ has the form $\pi^{-1}\tilde{t}_{n^\prime}(\gamma_2)$. 
Since $\mathfrak{t}_2^\prime$ is a refinement of $\mathfrak{t}_1$, the $\alpha_{1,1}$-name of any fiber 
of $\mathfrak{t}_2^\prime$ is a concatenation of $\alpha_{1,1}$-names of fibers of $\mathfrak{t}_1$. 

We obtain a new partition $\alpha_{2,2}$ by copying the 
$(\alpha_{1,1} \vee \tau_2 \vee \beta_2)$-name of a good fiber in each column of 
$\mathfrak{t}_2^\prime$ into bad fibers in the same column, provided that good fibers exist in the 
column. The $\beta_2$-part (or, 
$\beta_2$-coordinate) of any $(\alpha_{1,1} \vee \tau_2 \vee \beta_2)$-name never changes under 
the copying procedure although the other parts may change. In particular, the copying procedure 
preserves $(\alpha_{1,1} \vee \tau_2 \vee \beta_2)$-name $(1,1,1)$. Recall that 
$K_{\beta_2}=K_{\alpha_{1,1}}=K_{\tau_2}=K$. These facts guarantee that 
$K_{\alpha_{2,2}}=K$. Since 
\[
d\left(\alpha_{1,1} \vee \tau_2 \vee \beta_2,\alpha_{2,2}\right) \le \delta_2 < \frac{1}{2^4},
\]
it follows from \eqref{approx_of_E2_by_tau2} that $E_2 \overset{1/2^3}{\in} (\alpha_{2,2})^T$. 
Since $\delta_2 < \mu(A)$ for all finite atoms $A$ of $(\alpha_{1,1} \vee \tau_2 \vee \beta_2)_{-1}^1$, 
we see that 
$\mathfrak{A}_{(\alpha_{2,2})_{-1}^1}=\mathfrak{A}_{(\alpha_{1,1} \vee \tau_2 \vee \beta_2)_{-1}^1}$ 
as well as $\mathfrak{A}_{\alpha_{2,2}}=\mathfrak{A}_{\alpha_{1,1} \vee \tau_2 \vee \beta_2}$. 
Since, as mentioned above, the $\beta_2$-part of any $(\alpha_{1,1} \vee \tau_2 \vee \beta_2)$-name 
never changes under the copying procedure, we see that $\alpha_{2,2} \succcurlyeq \beta_2$ and moreover 
$f_{\beta_2,\alpha_{2,2}}=f_{\beta_2,\alpha_{1,1} \vee \tau_2 \vee \beta_2}$. 
In virtue of Remark~\ref{conti_1-block_to_k-block}, we obtain that 
\[
d((\alpha_{1,1} \vee \tau_2 \vee \beta_2)_{-1}^1,(\alpha_{2,2})_{-1}^1) < 3{r_2}^3\delta_2 < 
\frac{1}{4 \cdot 2^{n_2}{r_2}^4}.
\]

The intersection $R_2$ of the set $K$ with the union of principal columns of $\mathfrak{t}_2^\prime$ 
which include no good fibers is uniform relative to $K$, because it is 
$\mathcal{F}_{\beta_2}$-measurable. Applying the same argument as we developed above in order to find 
$N_1^\prime$ in the proof of Lemma~\ref{lem_Step_1}, we obtain an integer $N_2^\prime$ with 
$N_2^\prime > N_1$ such that for any point $y \in K$, 
\begin{equation*}\label{unf_alpha22}
A \in (\alpha_{2,2})_{-1}^1 \cap \B_0 \textrm{ and } T_n\mathbbm{1}_K(y) \ge N_2^\prime \Rightarrow 
\left| \frac{T_n\mathbbm{1}_A(y)}{T_n\mathbbm{1}_K(y)} - \frac{\mu(A)}{\mu(K)}\right| < 
\frac{1}{2^{n_2} {r_2}^4}. 
\end{equation*}
Lemma~\ref{name_const_tall_tower} allows us to have an $\mathcal{F}_{\beta_2}$-measurable 
refinement $\mathfrak{t}_2$ of $\mathfrak{t}_2^\prime$ with $h_K(\mathfrak{t}_2) \ge N_2^\prime$. 
We then obtain an integer $N_2$ with $N_2 \ge N_2^\prime$ for which the last property of the lemma 
is valid. 


Define $\alpha_{2,1}$ to be a partition whose atom with a subscript 
$i \in \mathfrak{A}_{\alpha_{1,1}}$ is the union of atoms of $\alpha_{2,2}$ with subscripts of the 
form $(i,\ast,\ast) \in \mathfrak{A}_{\alpha_{1,1} \vee \tau_2 \vee \beta_2} = 
\mathfrak{A}_{\alpha_{2,2}}$. Since the copying procedure changes names on a set of measure 
$\delta_2$ at most, we see that $d(\alpha_{2,1},\alpha_{1,1}) \le \delta_2$, which together with 
Lemma~\ref{lem_Step_1} implies that 
\[
E_1 \overset{1/2^2+1/2^3}{\in} (\alpha_{2,1})^T.
\]
Also, it immediately follows from definition of $\alpha_{2,1}$ that 
$\alpha_{2,2} \succcurlyeq \alpha_{2,1}$. 
Since $f_{\beta_2,\alpha_{2,2}}=f_{\beta_2,\alpha_{1,1} \vee \tau_2 \vee \beta_2}$, 
$\beta_2 \succcurlyeq \beta_1$, $\alpha_{2,2} \succcurlyeq \beta_2$ and 
$\alpha_{1,1} \succcurlyeq \beta_1$, it is easy to see that 
\begin{align*}
f_{\beta_1,\alpha_{2,2}} &= f_{\beta_1,\beta_2} \circ f_{\beta_2,\alpha_{2,2}} = 
f_{\beta_1,\beta_2} \circ f_{\beta_2,\alpha_{1,1} \vee \tau_2 \vee \beta_2} \\ 
&= f_{\beta_1,\alpha_{1,1} \vee \tau_2 \vee \beta_2} \\
& = f_{\beta_1,\alpha_{1,1}} \circ f_{\alpha_{1,1}, \alpha_{1,1} \vee \tau_2 \vee \beta_2}.
\end{align*}
It follows therefore that for all $(i,j,k), (i,j^\prime, k^\prime) \in \mathfrak{A}_{\alpha_{2,2}}$, 
\[
f_{\beta_1,\alpha_{2,2}}(i,j,k) = f_{\beta_1, \alpha_{1,1}}(i) = 
f_{\beta_1,\alpha_{2,2}}(i,j^\prime,k^\prime),
\]
which shows that $\alpha_{2,1} \succcurlyeq \beta_1$. 

Since $\mathfrak{t}_2^\prime$ is a refinement of $\mathfrak{t}_1$, the copying procedure, which is 
carried out on $\mathfrak{t}_2^\prime$, guarantees that the $\alpha_{1,1}$-part of the 
$\alpha_{2,2}$-name of a given fiber of $\mathfrak{t}_1$ coincides with the $\alpha_{1,1}$-name of 
some fiber on a column where the given fiber lies. It follows therefore from definition of 
$\alpha_{2,1}$ that the $\alpha_{2,1}$-name of any fiber of $\mathfrak{t}_1$ coincides with the 
$\alpha_{1,1}$-name of some fiber of $\mathfrak{t}_1$. 
\end{proof}

Hence, the partition $\alpha_{2,1}$ has the $(N_1,1,1/2^{n_1})$-uniformity in virtue of the last 
property of Lemma~\ref{lem_Step_1}. As a consequence of the last property of Lemma~\ref{scndstplmm}, 
we know that the partition $\alpha_{2,1}$ has the $(N_2,1,1/2^{n_2})$- and 
$(N_2,2,1/2^{n_2})$-uniformity, because every finite atom of $\alpha_{2,1}$ 
(resp.\ $(\alpha_{2,1})_{-1}^1$) is a union of at most 
$(\# (\alpha_{1,1} \vee \tau_2 \vee \beta_2))^2 \times \# \tau_2 \times \# \beta_2$ (resp.\ ($\# 
(\tau_2 \vee \beta_2))^3)$ finite atoms of $(\alpha_{2,2})_{-1}^1$. Similarly, the partition 
$\alpha_{2,2}$ has the $(N_2,1,1/2^{n_2})$-uniformity. 

Continuing the inductive steps, we will find triangular array: 
\[
\Set{\alpha_{i,j}|i \in \N,1 \le j \le i}
\]
of partitions of $Y$, sequences $1 \le n_1 < n_2 < \dots $ and $1 < N_1 < N_2 < \dots$ of integers, 
and refining sequence $\Set{\mathfrak{t}_i|i \in \N}$ of towers so that for each $i \in \N$, 
\begin{enumerate}[label=(\roman*), ref=(\roman*)]
\item\label{ref_of_alpha_in_each_i}
$\alpha_{i,1} \preccurlyeq \alpha_{i,2} \preccurlyeq \dots \preccurlyeq \alpha_{i,i}$;
\item\label{ref_of_beta_than_alpha}
$\beta_j \preccurlyeq \alpha_{i,j}$ for each integer $j$ with $1 \le j \le i$;
\item\label{approx_of_base}
$E_j \overset{\epsilon_{i,j}}{\in} (\alpha_{i,j})^T$ if $1 \le j \le i$, where 
$\epsilon_{i,j} =\sum_{k=j}^i 2^{-(k+1)} = 2^{-j} - 2^{-(i+1)}$;
\item\label{Cauchy_for_alphai}
$d(\alpha_{i,j},\alpha_{i+1,j})< 2^{-n_{i+1}}$ for each integer $j$ with $1 \le j \le i$;
\item
$\mathfrak{t}_i$ is $\mathcal{F}_{\beta_i}$-measurable;
\item\label{names_of_fibers_inherited}
for each integer $j$ with $1 \le j \le i$, the $\alpha_{i,j}$-name of any fiber of $\mathfrak{t}_j$ 
coincides with the $\alpha_{j,j}$-name of some fiber of $\mathfrak{t}_j$,
\item\label{/r^2i_uniform}
if a partition $\alpha$ of $Y$ satisfies that the $\alpha$-name of any fiber of $\mathfrak{t}_i$ 
coincides with the $\alpha_{i,i}$-name of some fiber of $\mathfrak{t}_i$, then the partition $\alpha$ 
has the $(N_i,i,1/(2^{n_i}{r_i}^{2i}))$-uniformity, where 
\[
r_i = 
\begin{cases}
\# \alpha_{i,i} & \textrm{ if } i \ne 1; \\
1 & \textrm{ if } i = 1.
\end{cases}
\]
\end{enumerate} 

\begin{lemma}[Consequence of the inductive steps]\label{central_lem}
There exist a sequence $\alpha_1 \preccurlyeq \alpha_2 \preccurlyeq \ldots$ of strictly uniform 
partitions of $Y$ satisfying that $\beta_j \preccurlyeq \alpha_j$ and 
$E_j \overset{1/2^j}{\in} (\alpha_j)^T$ for every $j \in \N$.
\end{lemma}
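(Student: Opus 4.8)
The plan is to obtain each $\alpha_j$ as the $d$-limit down the $j$-th column of the triangular array, and then to read off all four required properties from this convergence together with the book-keeping~\ref{ref_of_alpha_in_each_i}--\ref{/r^2i_uniform}. First I would fix $j$ and note that, by property~\ref{Cauchy_for_alphai}, $\sum_{i\ge j}d(\alpha_{i,j},\alpha_{i+1,j})<\sum_{i\ge j}2^{-n_{i+1}}<\infty$, so $(\alpha_{i,j})_{i\ge j}$ is $d$-Cauchy; since these partitions share the alphabet and the finite support $K$, completeness of $d$ produces a limit $\alpha_j$ with $K_{\alpha_j}=K$. Summability of the increments lets me invoke Borel--Cantelli: for $\mu$-a.e.\ $y$ the $\alpha_{i,j}$-symbol at $y$ is eventually constant and equal to the $\alpha_j$-symbol at $y$, a fact I will use repeatedly. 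The relations $\alpha_j\preccurlyeq\alpha_{j+1}$ and $\beta_j\preccurlyeq\alpha_j$ then pass to the limit: the labelling maps $f_{\alpha_{i,j},\alpha_{i,j+1}}$ and $f_{\beta_j,\alpha_{i,j}}$ are independent of $i$ by the construction, so convergence of the atoms in measure forces the corresponding inclusions modulo $\mu$, with the common finite support $K$ preserved. Finally, since $\beta_j$ is strictly uniform and $K_{\alpha_j}=K_{\beta_j}=K$, a point of $\hat{X}_{\alpha_j}$ equals $1^\infty$ exactly when its image under $\phi_{\beta_j,\alpha_j}$ does, so the infinite-hitting condition~\eqref{inf_often_hitting} for $\alpha_j$ is inherited from that for $\beta_j$.

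The approximation $E_j\overset{1/2^j}{\in}(\alpha_j)^T$ I would deduce from property~\ref{approx_of_base} by a triangle inequality. For $i\ge j$ pick $F_i\in(\alpha_{i,j})^T$ with $\mu(E_j\triangle F_i)\le\epsilon_{i,j}$ and replace each finite atom of $\alpha_{i,j}$ occurring in $F_i$ by the atom of $\alpha_j$ of the same subscript; this gives $F\in(\alpha_j)^T$ with $\mu(F_i\triangle F)\le d(\alpha_{i,j},\alpha_j)\le\sum_{l\ge i}2^{-n_{l+1}}$, whence $\mu(E_j\triangle F)\le 2^{-j}+2^{-(i+1)}$, the slack $2^{-(i+1)}$ in $\epsilon_{i,j}=2^{-j}-2^{-(i+1)}$ being what keeps the bound close to $2^{-j}$. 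Letting $i\to\infty$ and extracting a subsequence along which the (small) combinatorial data defining the $F_i$ stabilise then yields a single $F\in(\alpha_j)^T$ attaining $\mu(E_j\triangle F)\le 2^{-j}$; this last stabilisation is routine and I would not dwell on it.

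It remains to prove that each $\alpha_j$ is uniform, which is the heart of the matter. The strategy is to apply property~\ref{/r^2i_uniform} to $\alpha=\alpha_j$ at \emph{every} level $i\ge j$. The crucial observation is that the $\alpha$-name of a fiber of $\mathfrak{t}_i$ ranges over a \emph{finite} set, because such names are constant on each of the finitely many columns of $\mathfrak{t}_i$ (the analogue, for $\mathfrak{t}_i$, of the column-constancy in Lemma~\ref{name_const_tall_tower}). Combining this finiteness with the Borel--Cantelli statement above, the $\alpha_j$-name of $\mu$-a.e.\ fiber of $\mathfrak{t}_i$ equals the eventual $\alpha_{i',j}$-name of that fiber and hence lands \emph{exactly} in the finite set of admissible fiber-names; property~\ref{names_of_fibers_inherited}, in the form required here, identifies these admissible names with names carried by $\alpha_{i,i}$-fibers, so property~\ref{/r^2i_uniform} applies and endows $\alpha_j$ with $(N_i,i,1/(2^{n_i}r_i^{2i}))$-uniformity. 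Since $1/(2^{n_i}r_i^{2i})\to 0$ and $i\to\infty$, for each scale $k$ and each $\epsilon>0$ there is such an $i\ge\max(j,k)$, and $(N_i,i,\epsilon)$-uniformity entails $(N_i,k,\epsilon)$-uniformity; thus $\alpha_j$ has $(H,k,\epsilon)$-uniformity for all $k,\epsilon$, so every member of $\B_0\cap\bigcup_n(\alpha_j)_{-n}^{n-1}$ is uniform relative to $K$ and $\alpha_j$ is uniform, hence strictly uniform by the first paragraph.

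The step I expect to be the main obstacle is precisely this uniformity transfer. One cannot perturb $(H,k,\epsilon)$-uniformity naively through the $d$-approximation $\alpha_j\approx\alpha_{i,j}$, because the symmetric difference of two $(2k-1)$-block atoms need not itself be uniform and its Birkhoff ratios are uncontrollable on individual orbits, leaving a genuine measure-theoretic residual. What rescues the argument is that fiber-names on $\mathfrak{t}_i$ take only finitely many values, so the fast, $d$-summable convergence forces the limiting names into the admissible finite set \emph{exactly} rather than merely $d$-close; this is what permits property~\ref{/r^2i_uniform} to be invoked at every level $i$, and therefore at every scale and every precision, with no residual to control. Pinning down the bookkeeping between the column partitions $\alpha_{i',j}$, the reference names of $\alpha_{i,i}$, and the refining towers $\mathfrak{t}_i$, so that the finite admissible set on $\mathfrak{t}_i$ is indeed the one property~\ref{/r^2i_uniform} requires, is the delicate point around which the whole proof turns.
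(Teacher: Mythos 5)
Your overall architecture --- take $d$-limits down the columns using \ref{Cauchy_for_alphai}, pass the refinement relations and the approximation of $E_j$ to the limit using the slack $2^{-(i+1)}$ in $\epsilon_{i,j}$, and deduce strict uniformity from $\beta_j \preccurlyeq \alpha_j$ --- matches the paper. The gap is in the uniformity transfer, exactly where you flagged the difficulty. You propose to apply property \ref{/r^2i_uniform} at every level $i \ge j$ directly to $\alpha = \alpha_j$. But the hypothesis of \ref{/r^2i_uniform} at level $i$ is that the $\alpha$-name of any fiber of $\mathfrak{t}_i$ \emph{coincides} with the $\alpha_{i,i}$-name of some fiber of $\mathfrak{t}_i$; this forces $\mathfrak{A}_\alpha = \mathfrak{A}_{\alpha_{i,i}}$, whereas $\mathfrak{A}_{\alpha_j} = \mathfrak{A}_{\alpha_{j,j}}$ is strictly coarser when $j < i$, so the hypothesis cannot hold. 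Likewise, property \ref{names_of_fibers_inherited} only speaks of fibers of $\mathfrak{t}_j$, never of $\mathfrak{t}_i$ for $i>j$, so it cannot supply the identification you invoke ``in the form required here.'' Your further assertion that $(N_i,i,\epsilon)$-uniformity entails $(N_i,k,\epsilon)$-uniformity with the same $\epsilon$ is also false as stated: a finite atom of $(\alpha_j)_{-k+1}^{k-1}$ is a union of up to $r_i^{2i-1}$ finite atoms of $(\alpha_i)_{-i+1}^{i-1}$, and the empirical-distribution error gets multiplied by that count.

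The paper closes both gaps at once by using \ref{/r^2i_uniform} only on the diagonal. For each $i$, the limit partition $\alpha_i$ satisfies the hypothesis of \ref{/r^2i_uniform} at its own level (by \eqref{alphaj} together with \ref{names_of_fibers_inherited} for $j=i$), hence has the $(N_i,i,1/(2^{n_i}r_i^{2i}))$-uniformity. Then, for fixed $j$, $k$ and $\epsilon$, one chooses $i \ge j \vee k$ with $2^{-n_i}<\epsilon$; since $\alpha_j \preccurlyeq \alpha_i$, every finite atom of $(\alpha_j)_{-k+1}^{k-1}$ is a union of at most $r_i^{2i-1}$ finite atoms of $(\alpha_i)_{-i+1}^{i-1}$, so the accumulated error is at most $r_i^{2i-1}\cdot 1/(2^{n_i}r_i^{2i}) = 1/(2^{n_i}r_i) < \epsilon$, and $\alpha_j$ acquires the $(N_i,k,\epsilon)$-uniformity. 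This is precisely the role of the factor $r_i^{2i}$ that your argument leaves unused. The remainder of your proof is consistent with the paper's.
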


\begin{proof}
It follows from \ref{Cauchy_for_alphai} that for each $j \in \N$, the sequence 
$\Set{\alpha_{i,j}}_{i \in \N}$ is a Cauchy sequence in $d$. For each $j \in \N$, let $\alpha_j$ 
denote a partition of $Y$ for which 
\begin{equation}\label{alphaj}
\lim_{i \to \infty}d(\alpha_j, \alpha_{i,j}) = 0.
\end{equation}
It follows from \ref{ref_of_alpha_in_each_i} and \ref{ref_of_beta_than_alpha} that for each 
$j \in \N$, $\alpha_j \preccurlyeq \alpha_{j+1}$ and $\beta_j \preccurlyeq \alpha_j$, respectively. 
It follows from \eqref{alphaj} and \ref{names_of_fibers_inherited} that for each $j \in \N$, 
the $\alpha_j$-name of any fiber of $\mathfrak{t}_j$ coincides with the $\alpha_{j,j}$-name of some 
fiber of $\mathfrak{t}_j$. This together with \ref{/r^2i_uniform} implies that each $\alpha_j$ has 
the $(N_j,j,1/(2^{n_j}{r_j}^{2j}))$-uniformity. 

Now, fix $j \in \N$. Remark that $\# \alpha_i = r_i$ for every integer $i$ with $i \ge 2$. Let 
$\epsilon > 0$ and $k \in \N$. There exists $i \in \N$ with $i \ge j \vee k$ for which 
$1/2^{n_i} < \epsilon$. Since every finite atom of $(\alpha_j)_{-k+1}^{k-1}$ is the union of at most 
${r_i}^{2i-1}$ finite atoms of $(\alpha_i)_{-i+1}^{i-1}$, the partition $\alpha_j$ has the 
$(N_i,k,\epsilon)$-uniformity. Thus, the partition $\alpha_j$ is uniform since $k$ and $\epsilon$ 
are arbitrary. Moreover, the partition $\alpha_j$ is strictly uniform, because 
$\beta_j \preccurlyeq \alpha_j$ and $\beta_j$ satisfies \eqref{inf_often_hitting}. 

The second property of the lemma is a consequence of \ref{approx_of_base} and \eqref{alphaj}. 
\end{proof}

Let $\alpha_1,\alpha_2,\dots$ be strictly uniform partitions of $Y$ as in Lemma~\ref{central_lem}. Let 
$\hat{\mathbf{X}}=(\hat{X},\hat{S})$ denote an inverse limit of an inverse system 
$\Set{(\hat{X_{\alpha_i}},\hat{S_{\alpha_i}},\phi_{\alpha_i,\alpha_{i+1}})}_{i \in \N}$, i.e.
\[
\hat{X} = \Set{(x_i)_{i \in \N} \in \prod_{i \in \N}\hat{X_{\alpha_i}}| 
x_i = \phi_{\alpha_i,\alpha_{i+1}}(x_{i+1}) \textrm{ for all } i \in \N}, 
\]
which is a Cantor set under the relative topology induced by the product topology on 
$\prod_{i \in \N}\hat{X_{\alpha_i}}$, and $\hat{S}$ is a homeomorphism on $\hat{X}$ defined by 
$(x_i)_{i \in \N} \mapsto (\hat{S_{\alpha_i}}(x_i))_{i \in \N}$. 
Since it follows from Lemma~\ref{str_unf_iff_str_erg} that the subshift 
$(\hat{X_{\alpha_i}},\hat{S_{\alpha_i}})$ is almost minimal for every $i \in \N$, it is readily 
verified that the system $\hat{\mathbf{X}}$ is almost minimal; see for more details 
\cite[Lemma~3.1]{Y}. Let $\mathbf{X}=(X,S)$ denote a minimal, locally compact Cantor system whose 
one-point compactification is the almost minimal system $\hat{\mathbf{X}}$. 

Since $\hat{\lambda_{\alpha_{i+1}}} \circ 
(\phi_{\alpha_i,\alpha_{i+1}})^{-1} = \hat{\lambda_{\alpha_i}}$ for each $i \in \N$, Kolmogorov's 
extension theorem \cite{Okabe,Yam} for infinite measures allows us to have a unique, $\sigma$-finite 
measure $\hat{\lambda}$ on the Borel $\sigma$-algebra of $\hat{X}$ satisfying a condition that 
$\hat{\lambda} \circ {p_{\alpha_i}}^{-1}=\hat{\lambda_{\alpha_i}}$ for all $i \in \N$, where 
$p_{\alpha_i}$ is the projection from $\hat{X}$ to $\hat{X_{\alpha_i}}$. The condition implies that 
$\hat{\lambda}$ is $\hat{S}$-invariant. The restriction $\lambda$ of 
$\hat{\lambda}$ to the Borel $\sigma$-algebra of $X$ is a Radon measure, because a family: 
\[
\bigcup_{i \in \N} \Set{{p_{\alpha_i}}^{-1}(E)|E \textrm{ is a compact and open subset of } 
X_{\alpha_i}}
\] 
is a base for the topology of $X$ and 
$\hat{\lambda}({p_{\alpha_i}}^{-1}(E))=\hat{\lambda_{\alpha_i}}(E) < \infty$ for all compact and open 
subset $E$ of $X_{\alpha_i}$ and $i \in \N$. Since it follows from 
Lemma~\ref{str_unf_iff_str_erg} again that for each $i \in \N$, the almost minimal system 
$(\hat{X_{\alpha_i}}, \hat{S_{\alpha_i}})$ is bi-ergodic, it is readily verified that so is 
$(\hat{X},\hat{S})$; see for more details \cite[Lemma~3.2]{Y}. Let $\hat{\mathcal{A}}$ denote the 
completion of the Borel $\sigma$-algebra of $\hat{X}$ with respect to $\hat{\lambda}$. 

Define a measurable map $\theta : Y \to \hat{X}, y \mapsto (\phi_{\alpha_i}(y))_{i \in \N}$. Since 
$\phi_{\alpha_i} = p_{\alpha_i} \circ \theta$ for all $i \in \N$, we obtain that 
$(\mu \circ \theta^{-1}) \circ {p_{\alpha_i}}^{-1} = \mu \circ {\phi_{\alpha_i}}^{-1} = 
\hat{\lambda_{\alpha_i}}$ for all $i \in \N$, so that $\mu \circ \theta^{-1}=\hat{\lambda}$ in 
virtue of the uniqueness of Kolmogorov's extension theorem, i.e.\ the map $\theta$ is 
measure-preserving. The map $\theta$ is injective, because Lemma~\ref{central_lem} shows that 
$E_i \overset{1/2^i}{\in} (\alpha_i)^T$ for every $i \in \N$ and because each member of the base 
$\mathcal{E}$ appears infinitely often in $\mathcal{E}$. Since in view of \eqref{itinerary_words}, 
the image of each $\phi_{\alpha_i}$ is written as the intersection of a decreasing sequence of those 
finite unions of cylinder subsets which have full measure with respect to $\hat{\lambda_{\alpha_i}}$, 
the image is measurable and has full measure with respect to $\hat{\lambda_{\alpha_i}}$. This implies 
that the image of the map $\theta$ has full measure with respect to $\hat{\lambda}$, so that the 
map $\theta$ is surjective. At last, we know that $\theta$ is an isomorphism between $(Y,\B,\mu,T)$ and 
$(\hat{X},\hat{\mathcal{A}},\hat{\lambda},\hat{S})$, because for any point $y \in Y$, 
\[
\theta \circ T(y)=(\phi_{\alpha_i}(Ty))_{i \in \N} =(\hat{S_{\alpha_i}}\phi_{\alpha_i}(y))_{i \in \N}
=\hat{S} \circ \theta(y).
\]

In a similar way as above, we can see that the inverse limit $\hat{\mathbf{W}}=(\hat{W},\hat{V})$ of 
an inverse system $\Set{(\hat{X_{\beta_i}},\hat{S_{\beta_i}},\phi_{\beta_i,\beta_{i+1}})}_{i \in \N}$ 
is a bi-ergodic, almost minimal Cantor system. Since 
$\hat{X_{\beta_i}}=\hat{X_{\gamma_i}}$ and $\phi_{\beta_i,\beta_{i+1}}=\phi_{\gamma_i,\gamma_{i+1}}$ 
for all $i \in \N$, the system $\hat{\mathbf{W}}$ is identical with the inverse limit of an inverse 
system $\Set{(\hat{X_{\gamma_i}},\hat{S_{\gamma_i}},\phi_{\gamma_i,\gamma_{i+1}})}_{i \in \N}$. Let 
$\hat{\xi}$ denote a unique, $\sigma$-finite measure $\hat{\xi}$ satisfying that 
$\hat{\nu} \circ {p_{\beta_i}}^{-1} = \hat{\lambda_{\beta_i}}$ for all $i \in \N$. The measure 
$\hat{\xi}$ is $\hat{V}$-invariant. Remark that for all $i \in \N$, 
$\hat{\lambda_{\beta_i}} = \hat{\lambda_{\gamma_i}}$ in virtue of a fact that $\pi$ is 
measure-preserving and $p_{\beta_i} = p_{\gamma_i}$ by definition. 
Define a map $\iota : \hat{Z} \to \hat{W}, z \mapsto (\phi_{\gamma_i}(z))_{i \in \N}$, which is 
continuous because $\iota^{-1}({p_{\gamma_i}}^{-1}([u.v]_{\hat{X_{\gamma_i}}}))=
{\phi_{\gamma_i}}^{-1}([u.v]_{\hat{X_{\gamma_i}}})$ is open for all words $u$ and $v$ satisfying that 
$uv \in \L(\gamma_i)$. The map $\iota$ is injective in virtue of the second property of 
Lemma~\ref{seq_gen_top} and surjective because the inverse image $\iota^{-1}\Set{(x_i)_{i \in \N}}$ 
of a given point $(x_i)_{i \in \N} \in \hat{W}$ is written as the intersection of decreasing sequence 
$\Set{{\phi_{\gamma_i}}^{-1}(x_i)}_{i \in \N}$ of nonempty, compact subsets of $\hat{Z}$. It follows 
that $\iota$ is a homeomorphism. It is now readily verified that $\iota$ is an isomorphism between 
bi-ergodic, almost minimal systems $\hat{\mathbf{Z}}$ and $\hat{\mathbf{W}}$. We obtain that 
$\hat{\nu} \circ \iota^{-1} = \hat{\xi}$ because 
$(\hat{\nu} \circ \iota^{-1}) \circ {p_{\gamma_i}}^{-1} = \hat{\nu} \circ {\phi_{\gamma_i}}^{-1} = 
\hat{\lambda_{\gamma_i}}$ for all $i \in \N$. 

Define a continuous map $\kappa : \hat{X} \to \hat{W}, (x_i)_{i \in \N} \mapsto 
(\phi_{\beta_i,\alpha_i}(x_i))_{i \in \N}$, which is verified to be surjective in virtue of a fact 
that for all $i \in \N$, 
\[
\phi_{\beta_i,\beta_{i+1}} \circ \phi_{\beta_{i+1},\alpha_{i+1}}= \phi_{\beta_i,\alpha_i} \circ 
\phi_{\alpha_i,\alpha_{i+1}}.
\]
By using a fact that for all $i \in \N$,
\[
\phi_{\beta_i,\alpha_i} \circ \hat{S_{\alpha_i}}= \hat{S_{\beta_i}} \circ \phi_{\beta_i,\alpha_i},
\]
we see that $\kappa$ is a factor map from $\hat{\mathbf{X}}$ to $\hat{\mathbf{W}}$. The map $\kappa$ 
is measure-preserving because for all $i \in \N$, 
\[
(\hat{\lambda} \circ \kappa^{-1}) \circ {p_{\beta_i}}^{-1} = 
\hat{\lambda} \circ {p_{\alpha_i}}^{-1} \circ (\phi_{\beta_i,\alpha_i})^{-1} = 
\hat{\lambda_{\alpha_i}} \circ (\phi_{\beta_i,\alpha_i})^{-1} = \hat{\lambda_{\beta_i}}.
\]
We obtain that for any point $y \in Y$,
\[
\kappa \circ \theta(y)=(\phi_{\beta_i,\alpha_i}(\phi_{\alpha_i}(y)))_{i \in \N} =
(\phi_{\beta_i}(y))_{i \in \N} = (\phi_{\gamma_i}(\pi(y)))_{i \in \N} = \iota(\pi(y)).
\]
Putting $\rho = (\iota^{-1} \circ \kappa)|_X$ completes the proof of Theorem~\ref{ICMT}. 

\section{Proof of Theorem~\ref{categorical_consequence}}\label{pf_for_thm1.2}

In view of Theorem~\ref{ICMT}, it is sufficient to prove the last assertion of 
Theorem~\ref{categorical_consequence}. We shall now start the proof with the following lemma. 
\begin{lemma}\label{disjoint}
Suppose that $\mathbf{X}_i=(X_i,S_i)$ is a locally compact Cantor system for each $i \in \Set{1,2,3}$. 
Suppose that $p_i$ is a proper factor map from $\mathbf{X}_1$ to $\mathbf{X}_i$ for each $i \in 
\Set{2,3}$. If $\mathbf{X}_1$ is strictly ergodic and 
${p_2}^{-1} (U_2) \cap {p_3}^{-1} (U_3) \ne \emptyset$ for all nonempty open sets $U_2 \subset X_2$ 
and $U_3 \subset X_3$, then a locally compact Cantor system 
$\mathbf{X}_2 \times \mathbf{X}_3:=(X_2 \times X_3,S_2 \times S_3)$ is strictly ergodic. 
\end{lemma}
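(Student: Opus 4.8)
The plan is to package the two given factor maps into a single \emph{proper} factor map onto the product and then invoke Corollary~\ref{factor_of_unq_LCCM}. Concretely, I would define
\[
q:X_1 \to X_2 \times X_3, \quad q(x)=(p_2(x),p_3(x)).
\]
This map is continuous, and it intertwines the dynamics: since $p_i \circ S_1 = S_i \circ p_i$ for $i \in \{2,3\}$, we get $q \circ S_1 = (S_2 \times S_3) \circ q$. Thus, once $q$ is shown to be a proper surjection, it is a proper factor map from the strictly ergodic system $\mathbf{X}_1$ to $\mathbf{X}_2 \times \mathbf{X}_3$, and Corollary~\ref{factor_of_unq_LCCM} immediately yields the strict ergodicity of $\mathbf{X}_2 \times \mathbf{X}_3$. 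Before that, I would record that $X_2 \times X_3$ is itself a locally compact Cantor set, since the product of two totally disconnected, locally compact, non-compact metric spaces without isolated points inherits all of these properties, so the statement makes sense in the intended category.

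To verify properness of $q$, I would use the characterization recalled in Section~\ref{se} that a continuous map between locally compact Cantor sets is proper if and only if the inverse image of every compact set is compact. Given a compact $C \subset X_2 \times X_3$, let $C_2$ and $C_3$ denote its projections to $X_2$ and $X_3$, which are compact as continuous images of $C$. Then $q^{-1}(C) \subset {p_2}^{-1}(C_2) \cap {p_3}^{-1}(C_3)$; by properness of $p_2$ the set ${p_2}^{-1}(C_2)$ is compact, and $q^{-1}(C)$, being closed and contained in this compact set, is compact. Hence $q$ is proper, and in particular (again by the equivalences recorded in Section~\ref{se}) $q$ is a closed map, so $q(X_1)$ is closed in $X_2 \times X_3$.

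The surjectivity of $q$ then follows by combining closedness with density. The hypothesis that ${p_2}^{-1}(U_2) \cap {p_3}^{-1}(U_3) \ne \emptyset$ for all nonempty open $U_2 \subset X_2$ and $U_3 \subset X_3$ says precisely that $q(X_1)$ meets every nonempty basic open rectangle $U_2 \times U_3$; since such rectangles form a base for the product topology, $q(X_1)$ is dense. A dense closed subset of $X_2 \times X_3$ is the whole space, so $q$ is onto, and applying Corollary~\ref{factor_of_unq_LCCM} to the proper factor map $q$ completes the argument. I expect the only point genuinely requiring care to be the properness of $q$ --- specifically, confirming that the preimage-of-compact criterion transfers from the individual $p_i$ to the combined map $q$ via the projections of $C$ --- after which the density argument and the final appeal to the corollary are routine.
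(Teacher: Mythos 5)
Your proposal is correct and follows essentially the same route as the paper: both form the diagonal product map $x \mapsto (p_2(x),p_3(x))$, show it is a proper factor map onto $\mathbf{X}_2 \times \mathbf{X}_3$ (the paper via a composition of proper maps and passage to one-point compactifications, you via the compact-preimage criterion and closedness of proper maps), use the intersection hypothesis to get surjectivity, and conclude by Corollary~\ref{factor_of_unq_LCCM}.
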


\begin{proof}
For each $i \in \Set{1,2,3}$, let $\hat{\mathbf{X}_i}=(\hat{X_i},\hat{S_i})$ be the one-point 
compactification of $\mathbf{X}_i$ and $\hat{X_i}=X_i \cup \Set{\omega_i}$. We have another 
one-point compactification $\widehat{X_2 \times X_3}=(X_2 \times X_3) \cup \Set{(\omega_2,\omega_3)}$ 
of a locally compact Cantor set $X_2 \times X_3$, which gives a topological dynamical system 
$\widehat{\mathbf{X}_2 \times \mathbf{X}_3}=(\widehat{X_2 \times X_3},\widehat{S_2 \times S_3})$. 
Define a map $\widehat{p_2 \times p_3}:\hat{X_1} \to \widehat{X_2 \times X_3}$ by 
\[
\widehat{p_2 \times p_3}(x)=
\begin{cases}
(p_2(x),p_3(x)) & \textrm{ if } x \in X_1; \\
(\omega_2,\omega_3) & \textrm{ if } x = \omega_1.
\end{cases}
\]
Since the maps $X_1 \to X_1 \times X_1,x \mapsto (x,x)$ and $X_1 \times X_1 \to X_2 \times X_3, 
(x,x^\prime) \mapsto (p_2(x),p_3(x^\prime))$ are both proper, so is their composition 
$p_2 \times p_3:X_1 \to X_2 \times X_3,x \mapsto (p_2(x),p_3(x))$. Hence, the map 
$\widehat{p_2 \times p_3}$ is continuous. 
If $\widehat{p_2 \times p_3}$ is not surjective, then there exist a nonempty open subset 
$U$ of $\widehat{X_2 \times X_3}$, which is disjoint from the point $(\omega_2,\omega_3)$, such that 
$\widehat{p_2 \times p_3}^{-1}(U)=\emptyset$. However, it is clearly impossible in view of the 
hypothesis of the lemma. Hence, the map $\widehat{p_2 \times p_3}$ is a factor map from 
$\hat{\mathbf{X}_1}$ to $\widehat{\mathbf{X}_2 \times \mathbf{X}_3}$. 
Corollary~\ref{factor_of_unq_LCCM} completes the proof. 
\end{proof}

Assume that $\mathbf{Y}=(Y,\B,\mu,T)$ is an ergodic, infinite measure-preserving system whose cartesian 
product $\mathbf{Y} \times \mathbf{Y}$ with itself is ergodic; see \cite{MR0153815} for the existence 
of such systems. In Figure~\ref{inverted_tree}, let $\mathbf{Y}_2=\mathbf{Y}_3=\mathbf{Y}$, 
$\mathbf{Y}_1 = \mathbf{Y}_2 \times \mathbf{Y}_3$ and $q_i: \mathbf{Y}_1 \to \mathbf{Y}_i$ the 
projection for each $i \in \Set{2,3}$. Let $\gr(\mu,\id)$ denote the diagonal measure on $\mathbf{Y} 
\times \mathbf{Y}$, i.e.\ $\gr(\mu,\id)(A \times B)=\mu(A \cap B)$ for all sets $A,B \in \B$. 

Assume now that the diagram in Figure~\ref{inverted_tree} has a strictly ergodic, locally compact 
Cantor model:
\begin{center}
\includegraphics[scale=0.85]{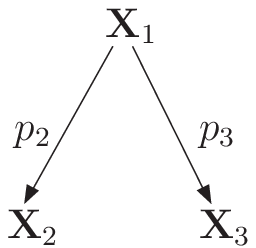}
\end{center}
For each $i \in \Set{1,2,3}$, let $\sigma_i$ denote the relevant isomorphism from $\mathbf{Y}_i$ to 
$\mathbf{X}_i=:(X_i,S_i)$. Suppose that $U_i \subset X_i$ is nonempty and open for each 
$i \in \Set{2,3}$. Since $\mu({\sigma_i}^{-1}(U_i))>0$ for each $i \in \Set{2,3}$, we know that 
\[
(\mu \times \mu)({q_2}^{-1}{\sigma_2}^{-1} (U_2) \cap {q_3}^{-1}{\sigma_3}^{-1} (U_3)) > 0,
\]
and hence, ${p_2}^{-1}(U_2) \cap {p_3}^{-1}(U_3) \ne \emptyset$. It follows from Lemma~\ref{disjoint} 
that $\mathbf{X}_2 \times \mathbf{X}_3$ is strictly ergodic. This leads to a contradiction that 
$\mu \times \mu$ coincides, up to a positive constant multiple, with the diagonal measure 
$\gr(\mu,\id)$. This completes the proof of Theorem~\ref{categorical_consequence}. 

\bigskip

\paragraph{Acknowledgement}
This work was partially supported by JSPS KAKENHI Grant Number 15K04900.


\begin{thebibliography}{21}

\bibitem{Aa}
Jon Aaronson, {\it An Introduction to Infinite Ergodic Theory}, 
Mathematical Surveys and Monographs, vol.~50, 
American Mathematical Society, Providence, RI, 1997. MR 1450400

\bibitem{akin2017chain}
Ethan Akin and Jim Wiseman, {\it Chain Recurrence For General Spaces}, 
arXiv preprint arXiv:1707.09601 (2017)

\bibitem{MR2875823}
F.~B{\'e}guin, S.~Crovisier and F.~Le Roux, 
{\it Realisation of measured dynamics as uniquely ergodic minimal 
homeomorphisms on manifolds}, Math. Z. {\bf 270} (2012), no.~1-2, 
59--102. MR 2875823

\bibitem{bourbaki}
Nicolas Bourbaki, Elements of {M}athematics: {G}eneral {T}opology. 
[chapters 1-4]., Springer, 1998. 

\bibitem{D}
Alexandre I.~Danilenko, {\it Strong orbit equivalence of locally compact {C}antor minimal 
systems}, Internat. J. Math. {\bf 12} (2001), no.~1, 113--123. MR 1812067

\bibitem{DHS}
F.~Durand, B.~Host and C.~Skau, {\it Substitutional dynamical systems, 
{B}ratteli diagrams and dimension groups}, Ergodic Theory Dynam. Systems 
{\bf 19} (1999), 953-993.

\bibitem{Du}
Fabien Durand, {\it A characterization of substitutive sequences using return words}, 
Discrete Math. {\bf 179} (1998), no.~1-3, 89--101. MR 1489074

\bibitem{G}
Eli Glasner, {\it Ergodic {T}heory via {J}oinings}, Mathematical Surveys and Monographs, 
vol.~101, American Mathematical Society, Providence, RI, 2003. M 1958753

\bibitem{HPS}
R.~Herman, I.~Putnam and C.~Skau, 
{\it Ordered {B}ratteli diagrams, dimension groups and topological dynamics}, 
Internat. J. Math., {\bf 3} (1992), 827-864.

\bibitem{HockYoung}
J. G. Hocking and G. S. Young, {\it Topology}, Dover Publications, New York, 1988. 

\bibitem{Jewett}
Robert I.~Jewett, {\it The prevalence of uniquely ergodic systems}, J. Math. Mech. 
{\bf 19} (1969/1970), 717--729. MR 0252604

\bibitem{MR0153815}
S.~Kakutani and W.~Parry, {\it Infinite measure preserving transformations with ``mixing''}, 
Bull. Amer. Math. Soc. {\bf 69} (1963), 752--756. MR 0153815

\bibitem{Krieger}
Wolfgang Krieger, {\it On unique ergodicity}, Proceedings of the {S}ixth {B}erkeley 
{S}ymposium on {M}athematical {S}tatistics and {P}robability ({U}niv. 
{C}alifornia, {B}erkeley, {C}alif., 1970/1971), {V}ol.~II: {P}robability theory, 
Univ. California Press, Berkeley, Calif., 1972, pp.~327--346. MR 0393402

\bibitem{M}
H. Matui, {\it Topological orbit equivalence of locally compact {C}antor minimal systems} 
Ergodic Theory Dynam. Systems {\bf 22} (2002), 1871-1903.


\bibitem{Okabe}
Y. Okabe, {\it On {K}olmogorov's extension theorem}, Sugaku {\bf 20} (1968), 222-225, 
in Japanese

\bibitem{Walters1}
P. Walters, {\it An {I}ntroduction to {E}rgodic {T}heory}, Graduate Texts in Mathematics, vol.\ 79, 
Springer-Verlag, New York, 1982.

\bibitem{W1}
 Benjamin Weiss, {\it Strictly ergodic models for dynamical systems}, 
 Bull. Amer. Math. Soc. (N.S.) {\bf 13} (1985), no.~2, 143--146. MR 799798
 
\bibitem{Yam}
Y. Yamasaki, {\it Kolmogorov's extension theorem for infinite measures}, 
Publ. RIMS, Kyoto Univ. {\bf 10} (1975), 381-411

\bibitem{Y4}
Hisatoshi Yuasa, {\it Invariant measures for the subshifts arising from 
non-primitive substitutions}, J. Anal. Math. {\bf 102} (2007), 143--180. 
MR 2346556

\bibitem{Y}
Hisatoshi Yuasa, {\it Uniform sets for infinite measure-preserving systems}, 
J. Anal. Math. {\bf 120} (2013), 333--356. MR 3095157

\bibitem{Y5}
Hisatoshi Yuasa, {\it Linearly recurrent sequences and {S}-adic sequences}, 
RIMS K\^oky\^uroku Bessatsu {\bf B58} (2016), 97-116.

\end{thebibliography}
\end{document}